\newtheorem{theorem}{Theorem}[section]
\newtheorem{proposition}[theorem]{Proposition}
\newtheorem{lemma}[theorem]{Lemma}
\newtheorem{corry}[theorem]{Corollary}
\theoremstyle{definition}
\theoremstyle{definition}
\newtheorem{rem}[theorem]{Remark}
\numberwithin{claim}{theorem}
\newcommand{\tr}{\mathrm{tr\,}}
\renewenvironment{proof}{\textit{Proof.}}{\hfill\ensuremath{\qed}}
\def \qed{\hfill{\hbox{$\square$}}}
\numberwithin{equation}{section}
\begin{document}
\title[Biconservative Surfaces in $L^n_1(f,c)$]{Biconservative Surfaces in Robertson-Walker Spaces}

 \author[N. Cenk Turgay]{Nurettin Cenk Turgay}
\address{Department of Mathematics, Faculty of Science and Letters, Istanbul Technical University, \.{I}stanbul, T{\"u}rk\.{I}ye}
\email{nturgay@itu.edu.tr}

\author[R. Ye\u{g}\.{i}n \c{S}en]{R\"{u}ya Ye\u{g}\.{i}n \c{S}en}
\address{Department of Mathematics, Faculty of Engineering and Natural Sciences, \.{I}stanbul Medeniyet University, \.{I}stanbul, T{\"u}rk\.{I}ye}
\email{ruya.yegin@medeniyet.edu.tr}

\begin{abstract}
In this paper, we mainly focus  on space-like PMCV surfaces in Robertson-Walker spaces. First, we derive certain geometrical properties of biconservative surfaces in the Robertson-Walker space $L^n_1(f, c)$ of arbitrary dimension. Then, we get complete local classifications of such surfaces in $L^4_1(f,0)$, $L^5_1(f,0)$ and $L^5_1(1,\pm 1)$. Finally, we proved that a space-like PMCV biconservative surface in $L^n_1(f,0)$ lies on a totally geodesic submanifold with dimension $4$ or $5$.
\end{abstract}

\subjclass[2010]{53A10(Primary), 53C42.}
\keywords{Biharmonic surfaces, parallel mean curvature vector, Robertson-Walker Spacetime}

\maketitle
\section{Introduction}

A biharmonic map $\psi:(M,g)\rightarrow (N,\widetilde{g})$ between two semi-Riemannian manifolds is characterized as a critical point of the bienergy functional defined by
$$ E_2:C^\infty(M,N)\to\mathbb{R}, \quad E_2(\psi)=\frac{1}{2}\int_{M}\widetilde{g}(\tau(\psi),\tau(\psi)) \, v_g, $$
where $v_g$ represents the volume element of $M$ and $\tau(\psi)$ denotes the tension of $\psi$ given by
$\tau(\psi)=\tr \nabla d\psi,$ \cite{Sasahara2012}. It is well established that the mapping $\phi$ is biharmonic if and only if it satisfies the Euler-Lagrange equation 
\begin{equation}\label{EulerLagrangeEq}
\tau_2(\psi):=\Delta \tau(\psi) - \tr \widetilde{R}(d\psi, \tau(\psi)) \, d\psi=0
\end{equation}
associated with the bi-energy functional, where $\tau_2(\psi)$ is said to be bi-tension of $\psi$, $\Delta$ and $\widetilde{R}$ denote the rough Laplacian acting on sections of $\psi^{-1}(TN)$ defined by
$$\Delta U=\tr  (\nabla_\cdot \nabla_\cdot U)$$
whenever $U\in\Gamma(\psi^{-1}(TN))$ and the curvature tensor field of $N$, respectively, \cite{Ji2}.

On the other hand, a mapping $\psi : (M,g) \rightarrow (N,\widetilde{g})$ that satisfies the condition
\begin{equation}\label{BiconsEq}
\widetilde g(\tau_2(\psi), d\psi) = 0,
\end{equation}
which is weaker than \eqref{EulerLagrangeEq}, is said to be a biconservative mapping. When $\psi=\phi$ is an isometric immersion, the equation \eqref{BiconsEq} simplifies to
\begin{equation}\label{EulerLagrangeEqBic}
\tau_2(\phi)^T = 0,
\end{equation}
where $\tau_2(\phi)^T$ denotes the tangential part of $\tau_2(\phi)$. In this case, $M$ is said to be a biconservative submanifold of $N$.

By examining the tangential component of $\tau_2(\phi)$, we can deduce the following well-known result from \eqref{EulerLagrangeEqBic}.

\begin{proposition}\label{PROPBiconser}\cite{MOR2016JGA}
Let $\phi: (M^m, g) \to  (N^n,\widetilde{g})$ be an isometric immersion between two semi-Riemannian manifolds. Then, $\phi$ is biconservative if and only if equation 
\begin{equation}\label{tangent component}
m \, \nabla \|H\|^2 + 4 \, \tr  \, A_{\nabla^\perp_{\cdot} H} (\cdot) + 4 \, \tr (\widetilde{R}(\cdot, H) \cdot)^T = 0,
\end{equation} 
is satisfied.
\end{proposition}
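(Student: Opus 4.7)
The plan is to compute the tangential part of $\tau_2(\phi)$ directly from \eqref{EulerLagrangeEq} and identify the resulting expression with \eqref{tangent component}. Since $\phi$ is an isometric immersion, the standard identity $\tau(\phi)=mH$ gives
\[
\tau_2(\phi)=m\,\Delta H-m\sum_i\epsilon_i\widetilde R(e_i,H)e_i,
\]
where $\{e_i\}$ is a local orthonormal frame of $M$ with causal characters $\epsilon_i=g(e_i,e_i)$. The tangential part of the curvature sum is manifestly $-m\,\tr(\widetilde R(\cdot,H)\cdot)^T$, so the work reduces to extracting $(\Delta H)^T$.

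Fixing a point $p\in M$ and a frame with $\nabla_{e_i}e_j|_p=0$, two successive applications of the Weingarten formula $\widetilde{\nabla}_X H=-A_H X+\nabla^\perp_X H$ will give, at $p$,
\[
\widetilde{\nabla}_{e_i}\widetilde{\nabla}_{e_i}H=-\nabla_{e_i}(A_H e_i)-h(e_i,A_H e_i)-A_{\nabla^\perp_{e_i}H}e_i+\nabla^\perp_{e_i}\nabla^\perp_{e_i}H.
\]
Summing over $i$ with the signs $\epsilon_i$ and projecting onto $TM$ then yields
\[
(\Delta H)^T=-\tr\nabla A_H-\tr A_{\nabla^\perp_\cdot H}(\cdot),
\]
where $\tr\nabla A_H$ denotes the tangential trace $\sum_i\epsilon_i\nabla_{e_i}(A_H e_i)$ evaluated at $p$.

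The core algebraic step is to rewrite $\tr\nabla A_H$ so that a gradient of $\|H\|^2$ appears. Starting from $\langle A_H e_i,e_k\rangle=\langle h(e_i,e_k),H\rangle$, differentiating along $e_i$, applying the Codazzi equation $(\nabla^\perp_X h)(Y,Z)-(\nabla^\perp_Y h)(X,Z)=(\widetilde R(X,Y)Z)^\perp$, and re-symmetrising the sum via the definition $H=\frac{1}{m}\sum_j\epsilon_j h(e_j,e_j)$, I expect to obtain the key identity
\[
\tr\nabla A_H=\frac{m}{2}\nabla\|H\|^2+\tr A_{\nabla^\perp_\cdot H}(\cdot)+\tr(\widetilde R(\cdot,H)\cdot)^T.
\]
Substituting back and combining with the curvature term in $\tau_2(\phi)$ then produces
\[
\tau_2(\phi)^T=-\tfrac{m}{2}\bigl(m\nabla\|H\|^2+4\tr A_{\nabla^\perp_\cdot H}(\cdot)+4\tr(\widetilde R(\cdot,H)\cdot)^T\bigr),
\]
from which \eqref{EulerLagrangeEqBic} $\Leftrightarrow$ \eqref{tangent component} follows at once.

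The main obstacle in carrying this out is the Codazzi-based rearrangement that isolates $\tfrac{m}{2}\nabla\|H\|^2$: one must commute $\nabla^\perp$ with $h$, symmetrise the resulting double sum over the frame using the definition of $H$, and correctly identify the remainder as the tangential projection of the ambient curvature trace using the pair-symmetry $\langle\widetilde R(X,Y)Z,W\rangle=\langle\widetilde R(Z,W)X,Y\rangle$. The indefinite signs $\epsilon_i$ add no conceptual difficulty but must be tracked carefully throughout the calculation.
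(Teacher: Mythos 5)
Your computation is correct and is exactly the standard derivation of the tangential part of the bitension field; in particular the key identity $\tr\nabla A_H=\frac{m}{2}\nabla\|H\|^2+\tr A_{\nabla^\perp_\cdot H}(\cdot)+\tr(\widetilde R(\cdot,H)\cdot)^T$ follows, as you indicate, from the Codazzi equation, the tensoriality of $\tr(\nabla^\perp h)(\cdot,\cdot)=m\nabla^\perp H$, and the pair symmetry of $\widetilde R$, with the $\epsilon_i$ causing no trouble. The paper does not prove this proposition but cites it to \cite{MOR2016JGA}, and your argument reproduces the proof given there.
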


The study of biharmonic maps between warped products was initiated by Balmu\c{s} \textit{et. al.} in 2007, \cite{BalmusMontaldoOniciuc2007}. In the following years, Roth obtained sufficient and necessary conditions for biharmonic submanifolds of Cartesian products of two non-flat Riemannian space-forms spheres in \cite{RothUpadhyay2017}. The study of biconservative surfaces in product spaces was started by Fetcu \textit{et. al.} in \cite{FetcuOniciucPinheiro2015}, where the main focus lies on surfaces in $\mathbb{S}^n\times \mathbb{R}$ and $\mathbb{H}^n\times \mathbb{R}$ with parallel mean curvature vector (PMCV). In \cite{ManfioNCTUpadhyay2019}, this investigation extended to higher-dimensional submanifolds in $\mathbb{S}^n\times \mathbb{R}$ and $\mathbb{H}^n\times \mathbb{R}$.

On the other hand,  PMCV surfaces of different Lorentzian spaces were researched in  \cite{AnciauxCipriani2020, ChenJVanderVeken2007, ChenJVanderVeken2010} and  some open problems about PMCV surfaces in Robertson-Walker spaces were presented in \cite{DekimpeJVanderVeken2020}.
In this paper, we study space-like PMCV surfaces embedded in the Lorentzian warped product spaces. In particular,  we complete the local classification of  space-like biconservative PMCV surfaces in  $L^n_1(f,0)$, $\mathbb E^1_1\times\mathbb S^n$ and  $\mathbb E^1_1\times\mathbb H^n$ by proving Theorem \ref{THML41f0SPL}, Theorem \ref{THML51f0SPL}, Theorem \ref{THMRxRcSP1}, Proposition \ref{NonExistenceE11H4} and Theorem \ref{CoDimMainTheorem}.

In Sect. 2, we give some basic facts and equations of the theory of submanifolds, along with the notation that we are going to use throughout the article. In Sect. 3, we explore some geometrical properties of PMCV surfaces in a Robertson-Walker space $L^n_1(f,c)$ with arbitrary dimension. Sect. 4 and Sect. 5 are devoted to surfaces of $L^4_1(f,0)$ and $L^5_1(f,0)$, respectively. Furthermore, biconservative PMCV surfaces of $\mathbb{E}^1_1 \times \mathbb{S}^4$ are studied in Sect. 6. In Sect. 7, we consider some of global properties of biconservative PMCV surfaces in $L^n_1(f,c)$. As a conclusion, we show the substantial co-dimension of a biconservative PMCV surface is either 2 or 3.


\section {Preliminaries}
In this section, we give a brief summary of basic facts and equations of the theory of submanifolds, \cite{OneillBook}.

\subsection{Basic Formul\ae\ and Definitions}
Let $R^{n-1}(c)$ denote the $n-1$ dimensional Riemannian space-form with the constant sectional curvature $c$, i.e.,
$$R^{n-1}(c)=\left\{\begin{array}{cc}
\mathbb S^{n-1}&\mbox{if }c=1,\\
\mathbb E^{n-1}&\mbox{if }c=0,\\
\mathbb H^{n-1}&\mbox{if }c=-1
\end{array}\right.$$
with the metric tensor  $g_c$ and the Levi-Civita connection $\nabla^{R^{n-1}(c)}$. If $I$ is an open interval and $f:I\to\mathbb R$ is a smooth, non-vanishing function then the Robertson Walker space  $L^n_1(f,c)$ is defined as the Lorentzian warped product $I^1_1\times_f R^{n-1}(c)$ whose metric tensor $\widetilde g$ is
$$\widetilde g=\langle\cdot,\cdot\rangle=-dt^2+f(t)^2g_c.$$
The vector field $\displaystyle\frac{\partial}{\partial t} $ is known as the comoving observer field in general relativity, \cite{ChenJVanderVeken2007}.

Let  $\Pi^1:I\times R^{n-1}(c)\to I$ and $\Pi^2:I\times R^{n-1}(c)\to R^{n-1}(c)$
denote  the canonical projections. For a given vector field $X$ in ${L}^n_1(f,c)$, we define a function $X_0$ and a vector field $\bar X$ by the orthogonal decomposition
$$X=X_0\displaystyle\frac{\partial}{\partial t} +\bar X,$$
that is,
$$X=X_0\displaystyle\frac{\partial}{\partial t} +\sum\limits_{i=1}^{n-1}X_i\frac{\partial}{\partial x_i}:=(X_0,X_1,\hdots,X_{n-1}),$$ where 
$(x_1,x_2,\hdots,x_{n-1})$ is a Cartesian coordinate system in $\mathbb R^{n-1}$ and we have 
$$X_0=-\langle\displaystyle\frac{\partial}{\partial t} ,X\rangle,\qquad \Pi_*^1(\bar X)=0.$$ First, we would like to express the Levi-Civita connection of $ L^n_1(f,c)$. 
Occasionally, by misusing terminology, we are going to put $(0,\bar X)=\bar X$.

We are going to use the following lemma which can be directly obtained from  \cite{OneillBook} (See also \cite[Lemma 2.1]{ChenJVanderVeken2007}).
\begin{lemma}\label{LemmaLn1f0LCConnect}
The Levi-Civita connection $\widetilde\nabla$ of ${L}^n_1(f,c)$ is
\begin{eqnarray}
\label{RWSTconn}
\widetilde{\nabla}_X Y&=&\nabla^0_XY+\frac{f'}{f} \left(\widetilde g(\bar X,\bar Y){\displaystyle\frac{\partial}{\partial t} }+ X_0\bar Y+Y_0\bar X\right)
\end{eqnarray}
whenever $X$ and $Y$ are tangent to ${L}^n_1(f,c)$, where  $\nabla^0$ denotes the Levi-Civita connection of the Cartesian product space ${L}^n_1(1,c)=I\times R^{n-1}(c)$.
\end{lemma}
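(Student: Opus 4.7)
The plan is to reduce the statement to O'Neill's standard warped-product connection formulas. Identify $L^n_1(f,c)=B\times_f F$ with Lorentzian base $B=(I,-dt^2)$ and Riemannian fiber $F=(R^{n-1}(c),g_c)$. For vector fields that are lifts, O'Neill's three formulas read: $\widetilde\nabla_X Y=\nabla^B_X Y$ when both $X,Y$ come from $B$; $\widetilde\nabla_X Y=(X(f)/f)\,Y$ when $X$ is a lift from $B$ and $Y$ a lift from $F$ (and the same with the roles reversed, by the torsion-free condition); and $\widetilde\nabla_X Y=\nabla^F_X Y-(\widetilde g(X,Y)/f)\,\mathrm{grad}^B f$ when both are lifts from $F$, where $\mathrm{grad}^B$ denotes the gradient in $(B,-dt^2)$.

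The decisive observation is that, because of the Lorentzian signature, $\mathrm{grad}^B f=-f'\,\partial/\partial t$. Substituting this into the fiber-fiber case turns it into $\widetilde\nabla_X Y=\nabla^F_X Y+(f'/f)\widetilde g(X,Y)\,\partial/\partial t$. Now for arbitrary vector fields, write $X=X_0\,\partial/\partial t+\bar X$ and $Y=Y_0\,\partial/\partial t+\bar Y$ and assemble the three cases bilinearly. The base-base piece together with the $\nabla^F$ part of the fiber-fiber piece collapses into the Cartesian-product connection $\nabla^0_X Y$ (which is precisely what the right-hand side of \eqref{RWSTconn} becomes when $f\equiv 1$), while the remaining contributions collect into $(f'/f)\bigl(\widetilde g(\bar X,\bar Y)\,\partial/\partial t+X_0\bar Y+Y_0\bar X\bigr)$, giving the stated identity.

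Extending from lifts to arbitrary vector fields is automatic, because both $\widetilde\nabla$ and $\nabla^0$ are $C^\infty(M)$-linear in $X$ and satisfy the same Leibniz rule in $Y$, so the displayed correction term (which is manifestly $C^\infty(M)$-bilinear in $X$ and $Y$) need only be verified on the coordinate basis $\{\partial/\partial t,\partial/\partial x^i\}$. The main obstacle, if one can call it that, is bookkeeping: one has to handle the sign in $\mathrm{grad}^B f$ correctly and carefully translate between $g_c(\bar X,\bar Y)$ and $\widetilde g(\bar X,\bar Y)=f^2 g_c(\bar X,\bar Y)$ so that the various powers of $f$ cancel down to the single factor $f'/f$. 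As a fully equivalent alternative, one may compute the Christoffel symbols of $\widetilde g$ directly from $\widetilde g_{00}=-1,\widetilde g_{ij}=f^2(g_c)_{ij}$: the only components beyond those of $g_c$ that survive are $\widetilde\Gamma^0_{ij}=ff'(g_c)_{ij}$ and $\widetilde\Gamma^k_{0i}=(f'/f)\delta^k_i$, which upon substitution into $\widetilde\nabla_X Y=X(Y^\mu)\partial_\mu+X^\alpha Y^\beta\widetilde\Gamma^\mu_{\alpha\beta}\partial_\mu$ reproduce exactly \eqref{RWSTconn}.
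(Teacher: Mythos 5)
Your argument is correct and is exactly the derivation the paper alludes to: the paper offers no written proof, only the remark that the formula ``can be directly obtained from'' O'Neill's warped-product connection formulas, which is precisely what you carry out (with the sign $\mathrm{grad}^B f=-f'\,\partial/\partial t$ and the tensoriality of $\widetilde\nabla-\nabla^0$ handled correctly). Nothing further is needed.
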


On the other hand, the Riemann curvature tensor $\widetilde R$ of ${L}^n_1(f,c)$ is as follows:
\begin{lemma}\label{LemmaLn1f0CurvTensor}\cite{ChenJVanderVeken2007}.
The Riemannian curvature tensor $\widetilde R$ of ${L}^n_1(f,c)$ satisfies
\begin{align}
\begin{split}
\widetilde R (\frac{\partial}{\partial t} ,\bar X )\frac{\partial}{\partial t} =\frac{f''}f\bar X, \qquad& \widetilde R (\frac{\partial}{\partial t} ,\bar X )\bar Y=\frac{f''}f\langle \bar X,\bar Y\rangle\frac{\partial}{\partial t} ,\\
\widetilde R (\bar X,\bar Y )\frac{\partial}{\partial t} =0, \qquad& \widetilde R (\bar X,\bar Y )\bar Z=\frac{f'{}^2+c}{f^2} (\langle \bar Y,\bar Z\rangle \bar X-\langle \bar X,\bar Z\rangle \bar Y )
\end{split}
\end{align}
whenever $\Pi_*^1(\bar X)=\Pi_*^1(\bar Y)=\Pi_*^1(\bar Z)=0$.
\end{lemma}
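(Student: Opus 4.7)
The plan is to compute each of the four identities directly from
$$\widetilde R(X,Y)Z=\widetilde\nabla_X\widetilde\nabla_Y Z-\widetilde\nabla_Y\widetilde\nabla_X Z-\widetilde\nabla_{[X,Y]}Z$$
by repeatedly applying Lemma \ref{LemmaLn1f0LCConnect}. It is convenient to first extract from that formula the three building blocks one expects to reuse: writing $T=\frac{\partial}{\partial t}$, one has (i) $\widetilde\nabla_{T}T=0$, (ii) $\widetilde\nabla_{\bar X}T=\widetilde\nabla_{T}\bar X=\frac{f'}{f}\bar X$, and (iii) $\widetilde\nabla_{\bar X}\bar Y=\nabla^0_{\bar X}\bar Y+\frac{f'}{f}\widetilde g(\bar X,\bar Y)\,T$. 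For the basic horizontal/vertical lifts against which the curvature is tested, $[T,\bar X]=0$, so the bracket term drops from the two mixed identities.

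For $\widetilde R(T,\bar X)T$ I would expand $\widetilde\nabla_T\widetilde\nabla_{\bar X}T=\widetilde\nabla_T\bigl(\frac{f'}{f}\bar X\bigr)$ via Leibniz: the $T$-derivative of the coefficient yields $\frac{f''}{f}-\frac{(f')^2}{f^2}$, and a second application of (ii) contributes $\frac{(f')^2}{f^2}\bar X$, which telescope to $\frac{f''}{f}\bar X$. Combined with $\widetilde\nabla_{\bar X}\widetilde\nabla_T T=0$ from (i), this gives the first identity. The second identity $\widetilde R(T,\bar X)\bar Y=\frac{f''}{f}\widetilde g(\bar X,\bar Y)T$ is parallel and uses (iii); the decisive point is that $\nabla^0_T\bar Y=0$ for basic lifts, which is exactly why the output is purely $T$-valued. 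The third identity $\widetilde R(\bar X,\bar Y)T=0$ is then cleanest to obtain from the first Bianchi identity applied to the triple $(\bar X,\bar Y,T)$ together with the second identity, which produces two equal and opposite $T$-terms.

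The most substantial case is $\widetilde R(\bar X,\bar Y)\bar Z$. Applying (iii) twice, the term $\widetilde\nabla_{\bar X}\widetilde\nabla_{\bar Y}\bar Z$ decomposes into a fibre piece $\nabla^0_{\bar X}\nabla^0_{\bar Y}\bar Z$ plus several correction terms involving $\frac{f'}{f}$ and $T$. After antisymmetrizing in $\bar X,\bar Y$ and subtracting $\widetilde\nabla_{[\bar X,\bar Y]}\bar Z$, the fibre pieces assemble into the curvature tensor of $R^{n-1}(c)$ with its metric $g_c$, namely $c\bigl(g_c(\bar Y,\bar Z)\bar X-g_c(\bar X,\bar Z)\bar Y\bigr)$; the mixed terms between $\nabla^0$ and $\frac{f'}{f}\widetilde g$ cancel pairwise by metric compatibility; and the surviving ``quadratic'' pieces collapse to $\frac{(f')^2}{f^2}\bigl(\widetilde g(\bar Y,\bar Z)\bar X-\widetilde g(\bar X,\bar Z)\bar Y\bigr)$. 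Rewriting $g_c=\widetilde g/f^2$ in the space-form curvature then produces exactly the claimed coefficient $\frac{(f')^2+c}{f^2}$.

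The main obstacle in this last step is the bookkeeping: one must keep track of which powers of $f$ combine with the warping inside $\widetilde g$ versus $g_c$, and must use crucially that $\bar X\bigl(\frac{f'}{f}\bigr)=0$ (since $f$ depends only on $t$ while $\bar X$ is vertical) so that, after antisymmetrization, all terms not of the claimed form genuinely cancel. Once these two points are handled carefully, the four identities follow by direct substitution.
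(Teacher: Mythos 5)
Your computation is correct, but note that the paper does not prove this lemma at all: it is quoted verbatim from Chen--Van der Veken \cite{ChenJVanderVeken2007}, so there is no internal argument to compare against. Your direct verification from Lemma \ref{LemmaLn1f0LCConnect} is the standard one and all four identities check out: the building blocks $\widetilde\nabla_{T}T=0$, $\widetilde\nabla_{\bar X}T=\widetilde\nabla_{T}\bar X=\tfrac{f'}{f}\bar X$ and $\widetilde\nabla_{\bar X}\bar Y=\nabla^0_{\bar X}\bar Y+\tfrac{f'}{f}\widetilde g(\bar X,\bar Y)T$ are right (the middle one tacitly uses $\nabla^0_T\bar X=0$, which is legitimate since curvature is tensorial and you may test it on basic lifts, for which also $[T,\bar X]=0$), the telescoping in $\widetilde R(T,\bar X)T$ gives $\tfrac{f''}{f}\bar X$, the Bianchi shortcut for $\widetilde R(\bar X,\bar Y)T$ works, and in the last identity the mixed $T$-terms do cancel against $\widetilde\nabla_{[\bar X,\bar Y]}\bar Z$ while $R^{g_c}=c(g_c(\bar Y,\bar Z)\bar X-g_c(\bar X,\bar Z)\bar Y)$ rescales to $\tfrac{c}{f^2}(\widetilde g(\bar Y,\bar Z)\bar X-\widetilde g(\bar X,\bar Z)\bar Y)$ as you say. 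The one bookkeeping point worth making explicit (you only allude to it) is that in the second identity $\widetilde g(\bar X,\bar Y)=f^2g_c(X,Y)$ is \emph{not} constant along $T$, so $T\bigl(\tfrac{f'}{f}\widetilde g(\bar X,\bar Y)\bigr)$ contributes $\bigl(\tfrac{f''}{f}+\tfrac{f'^2}{f^2}\bigr)\widetilde g(\bar X,\bar Y)$, of which the $\tfrac{f'^2}{f^2}$ part is cancelled by $\widetilde\nabla_{\bar X}\widetilde\nabla_T\bar Y$; with that spelled out the argument is complete.
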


\subsection{Surfaces in Robertson Walker Spaces}
Consider an oriented space-like surface $M$ in $L^n_1(f,c)$ equipped with the Levi-Civita connection $\nabla$ and metric tensor $g$. For convenience, we denote the induced connection of $L^n_1(f,c)$ also by $\widetilde\nabla$. The Gauss and Weingarten formul\ae\ are given by
\begin{eqnarray}
    \label{Gauss} \widetilde\nabla_X Y &=& \nabla_X Y + h(X,Y), \\
    \label{Weingarten} \widetilde\nabla_X \xi &=& -A_\xi(X) + \nabla^\perp_X \xi,
\end{eqnarray}
defining the second fundamental form  $ h $, the shape operator  $ A $, and the normal connection  $ \nabla^\perp $  of  $ M $, where  $ X $  and  $ Y $  are tangent vectors to  $ M $  and  $ \xi $  is a normal vector to  $ M $. Note that  $ A $  and  $ h $  are related by
\begin{eqnarray}
    \label{AhRelatedBy} 
    \widetilde g(h(X,Y),\xi) = g(A_\xi X,Y).
\end{eqnarray}
Moreover, the Codazzi equation
\begin{eqnarray}    
    \label{CodEqMostGeneral} \left(\widetilde R(X,Y)Z\right)^\perp &=& \left(\nabla^\perp_{X} h\right)(Y, Z) - \left(\nabla^\perp_{Y} h\right)(X, Z)
\end{eqnarray}
is satisfied, where the covariant derivative $\nabla^\perp h$ of the second fundamental form is defined by
$$\left(\nabla^\perp_X h\right)(Y, Z) = \nabla^\perp_X h(Y, Z) - h(\nabla_X Y, Z) - h(Y, \nabla_X Z).$$
Furthermore, since Lemma \ref{LemmaLn1f0CurvTensor} implies $\left(\widetilde R(X,Y)\xi\right)^\perp=0$, the Ricci equation turns into
\begin{eqnarray}
    \label{RicciLn1fc} 
    R^\perp(X,Y)\xi &=& h(X, A_\xi Y) - h(A_\xi X, Y),
\end{eqnarray}
where $R^\perp$ is the normal curvature tensor, \cite{ChenJVanderVeken2007}.

The mean curvature vector field $H$  of $M$  is defined by 
 $$H = \frac{1}{2} \tr h. $$
 $M$  is said to be PMCV if $H$  is parallel along the normal connection, i.e., $\nabla^\perp H = 0$. Further, the first and second normal space $N_1M$ and $N_2M$ of $M$ is defined by
\begin{eqnarray}
N_1M&=&\mathrm{span\,}\{h(X,Y)|X,Y\in TM\}\\
N_2M&=&\mathrm{span\,}\{h(X,Y),\nabla^\perp_Zh(X,Y)|X,Y,Z\in TM\},
\end{eqnarray}
respectively.

On the other hand, we decompose  $ \displaystyle\frac{\partial}{\partial t}  $  into its tangential and normal parts by
\begin{equation}
    \label{RWTS-etaandTDef} 
    \left. \displaystyle\frac{\partial}{\partial t}  \right|_M = T + \eta,
\end{equation}
where  $ T $  is a tangent vector field and  $\eta$  is normal to  $M$.

\begin{rem} \textbf{Assumptions}. Because of Lemma \ref{LemmaLn1f0CurvTensor}, the Robertson-Walker space   $L^n_1(f,c)$  has constant curvature if and only if the equation
 $\frac{f''}{f} = \frac{f'{}^2 + c}{f^2} $
is satisfied, \cite{ChenJVanderVeken2007}. In this case, every PMCV submanifold is trivially biconservative (see Proposition \ref{PROPBiconser}). On the other hand, if $T = 0 $  on a non-empty open subset $\mathcal{O} $  of $M$, then any component $  \mathcal{O}_1 $  of $  \mathcal{O} $  is a horizontal slice, i.e., $  (\mathcal{O}_1, g) \subset \{ t_0 \} \times_{f(t_0)} {R}^3(c) $. Hence, throughout this paper we have the following assumptions:
\begin{itemize}
\item $\displaystyle \frac{f''(t)}{f(t)} - \frac{f'(t)^2 + c}{f(t)^2} \neq 0$  for any $t \in I $,
\item $T$  does not vanish on $M$,
\item $\{e_1, e_2\} $  is an orthonormal basis for the tangent bundle of $  M $, where $e_1$  is proportional to $T$,
\item All surfaces are connected and all vector fields are smooth.
\end{itemize}
\end{rem}

We shall denote the coefficient of the second fundamental form by $h^\alpha_{ij}$, i.e., 
$$h^\alpha_{ij}=\langle h(e_i,e_j),e_\alpha\rangle=\langle A_{e_\alpha}e_i,e_j\rangle,\qquad i,j=1,2,$$
where $e_\alpha$ is  a unit normal vector field.

\subsection{Surfaces in a Lorentzian Product Space}
Now, consider the case $f=1$ and $c=\pm1$, i.e.,  $M$ is a non-degenerate surface in the Cartesian product space $\mathbb E^1_1\times R^{n-1}(c)$. Let $\hat\nabla$ denote the Levi-Civita connection of the flat ambient space $\mathbb E^{n+1}_r$ with the metric tensor
$$\tilde g=-dx_1^2+cdx_2^2+\sum\limits_{i=3}^{n+1}dx_i^2.$$
In this case, $\hat\nabla$ and $\widetilde\nabla$  are related with
\begin{subequations}\label{LCConnectionsRelatedAll}
\begin{eqnarray}
\label{LCConnectionsRelated1}\hat\nabla_X Y&=&\widetilde\nabla_X Y-c\left(\langle X,Y\rangle+\langle X,T\rangle\langle Y,T\rangle\right) e_{n+1}\\
\label{LCConnectionsRelated2}\hat\nabla_X \xi&=&\widetilde\nabla_X \xi-c\langle X,\eta\rangle\langle \xi,\eta\rangle e_{n+1}
\end{eqnarray}
whenever $X,Y$ are tangent to $M$ and $\xi$ is a normal vector field, where $e_{n+1}$ denotes the unit normal vector of $\mathbb E^1_1\times R^{n-1}(c)$ in  $\mathbb E^{n+1}_r$. Furthermore, we have
\begin{equation}\label{LCConnectionsRelated3}
\hat\nabla_X e_{n+1}=(X+\langle X,T\rangle T)+\langle X,T\rangle \eta.
\end{equation}
\end{subequations}


\section{Biconservative Surfaces in $L^n_1(f,c)$} 

In this section, we are going to consider space-like PMCV surfaces in the Robertson-Walker space $L^n_1(f,c)$, where $n\geq 4$.
\subsection{Space-like surfaces}
First, let $M$ be an oriented space-like surface in $L^n_1(f,c)$. In this case, by considering the decomposition \eqref{RWTS-etaandTDef}, we define the unit normal vector field $e_3$ and an `angle'  function $\theta$ by
\begin{equation}\label{RWTS-etaandTDefNew}
\left.\displaystyle\frac{\partial}{\partial t} \right|_M=\sinh\theta \, e_1+\cosh\theta \, e_3.  
\end{equation}
Then, by considering Lemma \ref{LemmaLn1f0CurvTensor}, one can observe that the Codazzi equation \eqref{CodEqMostGeneral} becomes
\begin{eqnarray}    
    \label{CodazziLn1fc1} 0 &=& \left(\nabla^\perp_{e_1} h\right)(e_2, e_1) - \left(\nabla^\perp_{e_2} h\right)(e_1, e_1), \\
    \label{CodazziLn1fc2}\sinh\theta\left(-\frac{f''}{f} + \frac{f'{}^2 + c}{f^2}\right)\eta &=& \left(\nabla^\perp_{e_1} h\right)(e_2, e_2) - \left(\nabla^\perp_{e_2} h\right)(e_1, e_2).
\end{eqnarray}

We are going to use the following lemma.
\begin{lemma}
Let $M$ be a space-like surface in $L^n_1(f,c)$, where $n\geq 4$. Then, the equations 
\begin{subequations}\label{SpaceLikeTNablaXpdte1ALL}
\begin{eqnarray}
\label{SpaceLikeTNablaXpdte1T} e_1(\theta)\cosh\theta \, e_1+\sinh\theta \nabla_{e_1}e_1-\cosh\theta A_{e_3}e_1&=&\frac{f'}{f}\cosh^2\theta e_1,\\
\label{SpaceLikeTNablaXpdte2T} e_2(\theta )\cosh\theta \, e_1+\sinh\theta \nabla_{e_2}e_1-\cosh\theta A_{e_3}e_2&=&\frac {f'}{f}e_2,\\
\label{SpaceLikeTNablaXpdte1N}e_1(\theta )\sinh\theta \, e_3+\sinh\theta h(e_1,e_1)+\cosh\theta\nabla^\perp_{e_1}e_3&=&\frac{f'}{f}\cosh\theta\sinh\theta e_3,\\
\label{SpaceLikeTNablaXpdte2N}e_2(\theta )\sinh\theta \, e_3+\sinh\theta h(e_1,e_2)+\cosh\theta\nabla^\perp_{e_2}e_3&=&0,
\end{eqnarray}
\end{subequations}
are satisfied, where $e_3$ is the unit normal vector field defined by \eqref{RWTS-etaandTDefNew}.
\end{lemma}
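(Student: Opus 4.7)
The strategy is to compute $\widetilde\nabla_{e_i}\frac{\partial}{\partial t}$ ($i=1,2$) in two different ways and match tangential and normal components.

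First I would apply Lemma \ref{LemmaLn1f0LCConnect} with $Y=\frac{\partial}{\partial t}$. Since $\frac{\partial}{\partial t}$ is the lift of the first coordinate, it is $\nabla^0$-parallel and $\overline{\partial/\partial t}=0$, $(\partial/\partial t)_0=1$, so the lemma collapses to
\begin{equation*}
\widetilde\nabla_X\frac{\partial}{\partial t}=\frac{f'}{f}\bar X.
\end{equation*}
From the decomposition \eqref{RWTS-etaandTDefNew} and the relation $X_0=-\langle \partial/\partial t,X\rangle$, I read off $(e_1)_0=-\sinh\theta$, $(e_2)_0=0$, whence $\bar e_1=e_1+\sinh\theta\,\frac{\partial}{\partial t}$ and $\bar e_2=e_2$. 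Substituting $\frac{\partial}{\partial t}|_M=\sinh\theta\,e_1+\cosh\theta\,e_3$ in $\bar e_1$ and using $1+\sinh^2\theta=\cosh^2\theta$ gives
\begin{equation*}
\frac{f'}{f}\bar e_1=\frac{f'}{f}\bigl(\cosh^2\theta\,e_1+\sinh\theta\cosh\theta\,e_3\bigr),\qquad \frac{f'}{f}\bar e_2=\frac{f'}{f}\,e_2.
\end{equation*}

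Secondly, I would differentiate $\frac{\partial}{\partial t}|_M=\sinh\theta\,e_1+\cosh\theta\,e_3$ along $e_i$, using the Gauss formula \eqref{Gauss} on $\widetilde\nabla_{e_i}e_1$ and the Weingarten formula \eqref{Weingarten} on $\widetilde\nabla_{e_i}e_3$. For $i=1$ this yields
\begin{equation*}
\widetilde\nabla_{e_1}\frac{\partial}{\partial t}=e_1(\theta)\cosh\theta\,e_1+\sinh\theta\bigl(\nabla_{e_1}e_1+h(e_1,e_1)\bigr)+e_1(\theta)\sinh\theta\,e_3+\cosh\theta\bigl(-A_{e_3}e_1+\nabla^\perp_{e_1}e_3\bigr),
\end{equation*}
and analogously for $i=2$. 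Equating this expression with $\frac{f'}{f}\bar e_1$ and splitting into the tangential and normal parts of $T_pL^n_1(f,c)=T_pM\oplus N_pM$ yields exactly \eqref{SpaceLikeTNablaXpdte1T} and \eqref{SpaceLikeTNablaXpdte1N}; the analogous split with $i=2$ gives \eqref{SpaceLikeTNablaXpdte2T} and \eqref{SpaceLikeTNablaXpdte2N}.

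There is no real obstacle here beyond careful bookkeeping: the only subtlety is the sign conventions induced by the Lorentzian metric, specifically the identity $(e_1)_0=-\sinh\theta$ coming from $\langle \partial/\partial t,\partial/\partial t\rangle=-1$, and the identity $1+\sinh^2\theta=\cosh^2\theta$ needed to turn $\bar e_1$ into the form appearing on the right-hand side of \eqref{SpaceLikeTNablaXpdte1T}--\eqref{SpaceLikeTNablaXpdte1N}. Once these are in place the four equations fall out immediately by linear independence of $\{e_1,e_2\}\subset TM$ and $\{e_3\}\cup(\text{other normals})\subset NM$.
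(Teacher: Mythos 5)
Your proof is correct and follows essentially the same route as the paper: compute $\widetilde\nabla_{e_i}\frac{\partial}{\partial t}$ once via Lemma \ref{LemmaLn1f0LCConnect} (obtaining $\frac{f'}{f}\bar e_i$ with $\bar e_1=\cosh^2\theta\,e_1+\sinh\theta\cosh\theta\,e_3$, $\bar e_2=e_2$) and once by differentiating $\sinh\theta\,e_1+\cosh\theta\,e_3$ with the Gauss and Weingarten formul\ae, then split into tangential and normal parts. You merely spell out the intermediate step $\widetilde\nabla_X\frac{\partial}{\partial t}=\frac{f'}{f}\bar X$ that the paper leaves implicit.
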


\begin{proof}
Consider a space-like surface in $L^n_1(f,c)$ and let $e_3$ be the vector field defined by \eqref{RWTS-etaandTDefNew}.
Note that Lemma \ref{LemmaLn1f0LCConnect}  implies 
\begin{eqnarray}
\label{LemmaL41f0SPLEqProof3} \widetilde\nabla_{e_1}\displaystyle\frac{\partial}{\partial t} &=&\frac{f'}{f}(\cosh^2\theta e_1+\cosh\theta\sinh\theta e_3),\\
\label{LemmaL41f0SPLEqProof7} \widetilde\nabla_{e_2}\displaystyle\frac{\partial}{\partial t} &=&\frac {f'}{f}e_2
\end{eqnarray}
and from \eqref{RWTS-etaandTDefNew}, we have
\begin{align}\label{SpaceLikeTNablaXpdtGeneral}
\begin{split}
\widetilde\nabla_{X}\displaystyle\frac{\partial}{\partial t} =&X(\theta )(\cosh\theta \, e_1+\sinh\theta \, e_3)+\sinh\theta\left(\nabla_{X}e_1+h(e_1,X)\right)\\&+\cosh\theta\left(-A_{e_3}(X)+\nabla^\perp_{X}e_3\right)
\end{split}
\end{align}
whenever $X$ is tangent to $M$. By combining \eqref{SpaceLikeTNablaXpdtGeneral} with \eqref{LemmaL41f0SPLEqProof3} and \eqref{LemmaL41f0SPLEqProof7}, we get
\begin{align}\label{LemmaL41f0SPLEqProof3a}
\begin{split}
\frac{f'}{f}(\cosh^2\theta e_1+\cosh\theta\sinh\theta e_3)=&\Big(e_1(\theta )\cosh\theta \, e_1+\sinh\theta \nabla_{e_1}e_1-\cosh\theta A_{e_3}(e_1)\Big)\\
&+\Big(e_1(\theta )\sinh\theta \, e_3+\cosh\theta\nabla^\perp_{e_1}e_3+\sinh\theta h(e_1,e_1)\Big)
\end{split}
\end{align}
and
\begin{align}\label{LemmaL41f0SPLEqProof7a}
\begin{split}
\frac {f'}{f}e_2=&\Big(e_2(\theta )\cosh\theta \, e_1+\sinh\theta \nabla_{e_2}e_1-\cosh\theta A_{e_3}(e_2)\Big)\\
&+\Big(e_2(\theta )\sinh\theta \, e_3+\cosh\theta\nabla^\perp_{e_2}e_3+\sinh\theta h(e_1,e_2)\Big),
\end{split}
\end{align}
respectively. The tangential and normal parts of \eqref{LemmaL41f0SPLEqProof3a} and \eqref{LemmaL41f0SPLEqProof7a} give \eqref{SpaceLikeTNablaXpdte1ALL}.
\end{proof}


We also need the following lemma.
\begin{lemma} \label{LemmaSpacelikeTRRH}
Let $M$ be a space-like surface in the Robertson-Walker space $L^n_1(f,c)$. Then, the mean curvature vector $H$ of $M$ satisfies
\begin{equation}\label{SpacelikeTRRH}
\tr (\widetilde{R}(\cdot,H)\cdot)^T=\left(\frac{f''}f - \frac{f'{}^2+c}{f^2}\right)\langle H,\eta\rangle T,
\end{equation}
where $T$ and $\eta$ are the vector fields defined by \eqref{RWTS-etaandTDef}.
\end{lemma}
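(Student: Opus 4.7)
The plan is to evaluate $\tr(\widetilde R(\cdot,H)\cdot)^T = \sum_{i=1}^{2}\bigl(\widetilde R(e_i,H)e_i\bigr)^T$ directly in the adapted frame $\{e_1,e_2\}$ with $e_1\propto T$, pulling everything through the decomposition $X = X_0\,\partial_t + \bar X$ so that Lemma \ref{LemmaLn1f0CurvTensor} applies termwise. Using \eqref{RWTS-etaandTDefNew} together with the fact that $H$ is normal and $T = \sinh\theta\,e_1$, $\eta = \cosh\theta\,e_3$, I would first record
\[
e_1 = -\sinh\theta\,\displaystyle\frac{\partial}{\partial t} + \bar e_1,\qquad e_2 = \bar e_2,\qquad H = H_0\displaystyle\frac{\partial}{\partial t} + \bar H,
\]
noting also that $H_0 = -\langle H,\partial_t\rangle = -\langle H,\eta\rangle$ (since $\langle H,T\rangle = 0$), that $\langle e_3,e_3\rangle = -1$ (forced by $\|\partial_t\|^2 = -1$), and that $\|\bar e_1\|^2 = \cosh^2\theta$, $\|\bar e_2\|^2 = 1$.

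Next, I would expand $\widetilde R(e_1,H)e_1$ and $\widetilde R(e_2,H)e_2$ multilinearly using the four identities of Lemma \ref{LemmaLn1f0CurvTensor}, writing $K_1 = f''/f$ and $K_2 = (f'{}^2+c)/f^2$. The terms involving $\widetilde R(\partial_t,\partial_t)$ and $\widetilde R(\bar X,\bar Y)\partial_t$ vanish, and the remaining pieces combine to
\begin{align*}
\widetilde R(e_1,H)e_1 &= -K_1 H_0\,\displaystyle\frac{\partial}{\partial t} + H_0\sinh\theta\,(K_1-K_2)\,\bar e_1 + \bigl(K_1\sinh^2\theta - K_2\cosh^2\theta\bigr)\bar H,\\
\widetilde R(e_2,H)e_2 &= -K_1 H_0\,\displaystyle\frac{\partial}{\partial t} - K_2\,\bar H,
\end{align*}
after reducing the inner products such as $\langle\bar H,\bar e_1\rangle = -\sinh\theta\,H_0$ and $\langle\bar H,\bar e_2\rangle = 0$.

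The final step is to take tangential components. From \eqref{RWTS-etaandTDefNew} one reads off $(\partial_t)^T = \sinh\theta\,e_1 = T$, $(\bar e_1)^T = \cosh^2\theta\,e_1$, and $(\bar H)^T = \langle H,\eta\rangle\sinh\theta\,e_1 = -H_0\,T$. Substituting and factoring out $\langle H,\eta\rangle T$, the coefficient collapses using $\cosh^2\theta-\sinh^2\theta = 1$ to exactly $K_1-K_2$, yielding \eqref{SpacelikeTRRH}.

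The proof is essentially a tracked calculation rather than a conceptual argument, so the only real obstacle will be the bookkeeping: keeping the sign from $\langle e_3,e_3\rangle = -1$ straight, remembering that $\bar e_1$ has norm $\cosh\theta$ (not $1$) when using $\widetilde R(\bar e_1,\bar H)\bar e_1$, and being careful that $H_0 = -\langle H,\eta\rangle$ carries a minus sign so that the final coefficient comes out with the correct sign $+\bigl(K_1-K_2\bigr)$ rather than its negative.
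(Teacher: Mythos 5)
Your proposal is correct and follows essentially the same route as the paper: decompose $e_1$, $e_2$ and $H$ into their $\partial/\partial t$ and spatial components, apply Lemma \ref{LemmaLn1f0CurvTensor} termwise, and take tangential parts via $(\bar e_1)^T=\cosh^2\theta\, e_1$ and $(\bar H)^T$; your intermediate curvature expressions and the vanishing of $\bigl(\widetilde R(e_1,H)e_1\bigr)^T$ all check out. The only cosmetic difference is your sign convention $H_0=-\langle H,\eta\rangle$ (the paper sets $H_0=+\langle H,\eta\rangle$), which you track consistently.
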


\begin{proof}
We define a function $H_0$ and smooth vector fields $\bar{e_1}$ and $\bar{H}$ by the orthogonal decomposition 
\begin{align}\label{SpacelikeEq1}
\begin{array}{rclcl}
e_1&=&-\sinh\theta\displaystyle\frac{\partial}{\partial t} &+&\bar{e_1},\\
H&=&-H_0\displaystyle\frac{\partial}{\partial t} &+&\bar{H},
\end{array}
\end{align}
from which we obtain
\begin{align}\label{SpacelikeEq2}
\begin{split}
\langle\bar{e_1},\bar{e_1}\rangle=&\cosh^2\theta,\\
\langle\bar{e_1},\bar{H}\rangle=&H_0\sinh\theta,
\end{split}
\end{align} 
where we have $H_0=\langle H,\eta\rangle$. 

On the other hand, by considering Lemma \ref{LemmaLn1f0CurvTensor}, we obtain
\begin{eqnarray*}
\widetilde{R}(e_1,H)e_1&=&\sinh^2\theta \widetilde{R}(\displaystyle\frac{\partial}{\partial t} ,\bar H)\displaystyle\frac{\partial}{\partial t} +H_0\sinh\theta \widetilde{R}(\bar{e_1},\displaystyle\frac{\partial}{\partial t} )\displaystyle\frac{\partial}{\partial t} -\sinh\theta\widetilde{R}(\displaystyle\frac{\partial}{\partial t} ,\bar H)\bar{e_1}\\
&&-H_0\widetilde{R}(\bar{e_1},\displaystyle\frac{\partial}{\partial t} )\bar{e_1}+\widetilde{R}(\bar{e_1},\bar H)\bar{e_1}\\
&=&\frac{f''}f\left(\sinh^2\theta \bar H-H_0\sinh\theta\bar {e_1}-\sinh\theta\langle \bar H,\bar {e_1}\rangle\displaystyle\frac{\partial}{\partial t} +H_0\langle \bar {e_1},\bar {e_1}\rangle\displaystyle\frac{\partial}{\partial t} \right).\\
&&+\frac{f'{}^2+c}{f^2}\left(\langle \bar H,\bar {e_1}\rangle \bar {e_1}-\langle \bar {e_1},\bar {e_1}\rangle \bar H\right)
\end{eqnarray*}
By combining this equation with \eqref{SpacelikeEq2}, we get
\begin{align}\label{SpacelikeEq3}
\begin{split}
\widetilde{R}(e_1,H)e_1=&\frac{f''}f\left(\sinh^2\theta \bar H-H_0\sinh\theta\bar {e_1}-H_0\sinh^2\theta\displaystyle\frac{\partial}{\partial t} +H_0\cosh^2\theta\displaystyle\frac{\partial}{\partial t} \right).\\
&+\frac{f'{}^2+c}{f^2}\left(H_0\sinh\theta \bar {e_1}-\cosh^2\theta \bar H\right).
\end{split}
\end{align}

Note that \eqref{SpacelikeEq1} and  \eqref{SpacelikeEq2} imply
\begin{align}\label{SpacelikeEq4}
\begin{split}
(\bar H)^T=&H_0 \sinh\theta e_1,\\
(\bar e_1)^T=& \cosh^2\theta e_1.
\end{split}
\end{align}
By combining \eqref{SpacelikeEq3} with \eqref{SpacelikeEq4}, we obtain
\begin{equation}\label{SpacelikeRe1He1}
\left(\widetilde{R}(e_1,H)e_1\right)^T=0.
\end{equation}

Similarly, we get
\begin{eqnarray*}
\widetilde{R}(e_2,H)e_2&=&\frac{f''}f H_0\displaystyle\frac{\partial}{\partial t} -\frac{f'{}^2+c}{f^2}  \bar H
\end{eqnarray*}
from which, along with \eqref{SpacelikeEq4}, we obtain
\begin{align}\label{SpacelikeRe2He2}
\begin{split}
\left(\widetilde{R}(e_2,H)e_2\right)^T =&\left(\frac{f''}f-\frac{f'{}^2+c}{f^2}\right) H_0\sinh\theta e_1.
\end{split}
\end{align}

Finally, we combine \eqref{SpacelikeRe1He1} and  \eqref{SpacelikeRe2He2} to get \eqref{SpacelikeTRRH}.
\end{proof}


\subsection{PMCV Surfaces in $L^n_1(f,c)$}
Let $M$ be a space-like PMCV surface in $L^n_1(f,c)$. By considering Proposition \ref{PROPBiconser} one can get that  $M$ is biconservative if and only if $$\tr (\widetilde{R}(\cdot,H)\cdot)^T=0$$ 
which is equivalent to
\begin{equation}\label{PropSpacelikeBicProofEq1}
\langle H,\eta\rangle=0
\end{equation}
because of Lemma \ref{LemmaSpacelikeTRRH}. Since $\eta$ is time-like, \eqref{PropSpacelikeBicProofEq1} implies that $H$ is space-like at every point. Therefore, we have the following corollary.
\begin{corry}
There are no marginally trapped biconservative PMCV surface in $L^n_1(f,c)$.
\end{corry}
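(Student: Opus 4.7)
The plan is to observe that the corollary is an immediate consequence of what has just been established, combined with the definition of ``marginally trapped.'' Recall that a surface is marginally trapped when its mean curvature vector field $H$ is everywhere lightlike and nowhere zero, i.e., $\langle H,H\rangle=0$ with $H\neq 0$. So the goal is to rule this out for any space-like PMCV surface in $L^n_1(f,c)$ satisfying the biconservativity condition \eqref{EulerLagrangeEqBic}.

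First I would reduce the biconservativity equation. Since $M$ is PMCV we have $\nabla^\perp H=0$, so $\|H\|^2$ is constant (hence $\nabla\|H\|^2=0$) and the shape-operator trace $\tr A_{\nabla^\perp_{\cdot}H}(\cdot)$ vanishes identically. Proposition \ref{PROPBiconser} therefore collapses to $\tr(\widetilde R(\cdot,H)\cdot)^T=0$. Feeding this into Lemma \ref{LemmaSpacelikeTRRH} gives
\[
\left(\frac{f''}{f}-\frac{f'{}^2+c}{f^2}\right)\langle H,\eta\rangle\, T=0,
\]
and the standing assumptions of the Remark (the curvature factor is nowhere zero on $I$, and $T$ is nowhere zero on $M$) force $\langle H,\eta\rangle=0$, which is exactly \eqref{PropSpacelikeBicProofEq1}.

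Next I would argue that $\eta$ is timelike. From \eqref{RWTS-etaandTDef} we have $\partial_t|_M=T+\eta$ with $T$ tangent to the space-like surface $M$ and hence space-like, while $\langle\partial_t,\partial_t\rangle=-1$. Orthogonality of $T$ and $\eta$ then yields $\langle\eta,\eta\rangle=-1-\|T\|^2<0$, so $\eta$ spans the timelike direction of the (Lorentzian) normal bundle $NM$. The orthogonal complement of a timelike vector in a Lorentzian space is a space-like subspace; since $H\in NM$ satisfies $\langle H,\eta\rangle=0$, it lies in this space-like complement. Consequently $\langle H,H\rangle\ge 0$, with equality only if $H=0$. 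This directly contradicts the marginally trapped condition $\langle H,H\rangle=0$ with $H\neq 0$, completing the proof.

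There is no serious obstacle here: all the analytic work is already contained in Lemma \ref{LemmaSpacelikeTRRH}, and the only conceptual ingredient to add is the linear-algebra observation that orthogonality to a timelike normal forces the remaining normal component to be space-like. The only thing to double check is that the ``PMCV+biconservative'' hypothesis really does make both the $\nabla\|H\|^2$ and $A_{\nabla^\perp H}$ terms in Proposition \ref{PROPBiconser} drop out simultaneously, so that the argument genuinely reduces to the single scalar condition $\langle H,\eta\rangle=0$.
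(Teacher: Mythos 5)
Your proposal is correct and follows essentially the same route as the paper: PMCV collapses Proposition \ref{PROPBiconser} to $\tr(\widetilde R(\cdot,H)\cdot)^T=0$, Lemma \ref{LemmaSpacelikeTRRH} together with the standing assumptions yields $\langle H,\eta\rangle=0$, and the time-likeness of $\eta$ forces $H$ to be space-like (or zero), excluding the marginally trapped case. Your explicit justification that $\langle\eta,\eta\rangle=-1-\|T\|^2<0$ and that orthogonality to a timelike normal vector places $H$ in a positive-definite subspace is a slightly more careful spelling-out of the paper's one-line remark, but it is the same argument.
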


\begin{proposition}\label{PropNonDegeBicCond}
Let $M$ be a space-like surface in $L^n_1(f,c)$ and assume that $\eta\neq0$. Then, $M$ is a biconservative PMCV surface if and only if there exists  a non-zero constant $H_0$ and a unit normal vector field $e_4$ such that
\begin{eqnarray}
\label{PropSpacelikeBicCondEq1}\nabla^\perp e_4&=&0,\qquad \langle e_4,\eta\rangle=0,\\ 
\label{PropSpacelikeBicCondEq2}A_{e_4}&=&\left(\begin{array}{cc}0&0\\0&2H_0 \end{array}\right),\\
\label{PropSpacelikeBicCondEq3}A_\xi&=&\left(\begin{array}{cc}\gamma_\xi&0\\0&-\gamma_\xi\end{array}\right)\qquad \mbox{whenever $\langle e_4,\xi\rangle=0$},
\end{eqnarray}
where $\gamma_\xi\in C^\infty (M)$.
\end{proposition}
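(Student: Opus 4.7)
The plan is to treat the two implications separately, combining the normal parts of equations \eqref{SpaceLikeTNablaXpdte1N}--\eqref{SpaceLikeTNablaXpdte2N}, the Ricci equation \eqref{RicciLn1fc}, and Lemma \ref{LemmaSpacelikeTRRH}.

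For the forward direction, assume $M$ is a biconservative PMCV surface. Since $\nabla^\perp H=0$, $\|H\|$ is constant; inserting Lemma \ref{LemmaSpacelikeTRRH} into the tangential equation of Proposition \ref{PROPBiconser} reduces biconservativity to $\langle H,\eta\rangle=0$. As $\eta=\cosh\theta\,e_3$ with $\cosh\theta>0$, this is equivalent to $\langle H,e_3\rangle=0$, so $H$ is space-like. Setting $e_4:=H/\|H\|$ and $H_0:=\|H\|$, both $\langle e_4,\eta\rangle=0$ and $\nabla^\perp e_4=0$ follow at once, which gives \eqref{PropSpacelikeBicCondEq1}.

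The decisive step is \eqref{PropSpacelikeBicCondEq2}. I would pair equations \eqref{SpaceLikeTNablaXpdte1N} and \eqref{SpaceLikeTNablaXpdte2N} with $e_4$: since $\langle e_3,e_4\rangle=0$ and $\nabla^\perp e_4=0$ force $\langle \nabla^\perp_X e_3,e_4\rangle=-\langle e_3,\nabla^\perp_X e_4\rangle=0$, the only surviving terms read $\sinh\theta\,h^4_{11}=0$ and $\sinh\theta\,h^4_{12}=0$. The standing assumption $T\neq 0$, i.e.\ $\sinh\theta\neq 0$, then gives $h^4_{11}=h^4_{12}=0$; the trace identity $h^4_{11}+h^4_{22}=2\langle H,e_4\rangle=2H_0$ fixes $h^4_{22}=2H_0$. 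For \eqref{PropSpacelikeBicCondEq3} I would plug the resulting $A_{e_4}$ into \eqref{RicciLn1fc}: the identity $R^\perp(e_1,e_2)e_4=0$ (consequence of $\nabla^\perp e_4=0$) collapses to $2H_0\,h(e_1,e_2)=0$, and since $H_0\neq 0$ this forces $h(e_1,e_2)=0$ in every normal component. Consequently $h^\xi_{12}=0$ for every normal $\xi$, while any $\xi$ with $\langle\xi,e_4\rangle=0$ additionally satisfies $\tr A_\xi=2\langle H,\xi\rangle=0$, so $A_\xi$ is the claimed diagonal matrix.

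The converse is a direct verification: computing $\langle H,e_\alpha\rangle=\tfrac{1}{2}\tr A_{e_\alpha}$ in every unit normal direction, the hypotheses yield $H=H_0 e_4$; constancy of $H_0$ combined with $\nabla^\perp e_4=0$ gives $\nabla^\perp H=0$ (PMCV), while $\langle H,\eta\rangle=H_0\langle e_4,\eta\rangle=0$ together with Lemma \ref{LemmaSpacelikeTRRH} and Proposition \ref{PROPBiconser} gives biconservativity. The main obstacle lies entirely in extracting $h^4_{11}=h^4_{12}=0$: one has to recognise that \eqref{SpaceLikeTNablaXpdte1N}--\eqref{SpaceLikeTNablaXpdte2N} have \emph{no} $e_4$-component on the right-hand side once $\nabla^\perp e_4=0$ is installed, so that the only balancing contribution on the left is the $\sinh\theta\,h(e_1,\cdot)$ piece --- after which the non-vanishing of $T$ finishes the argument.
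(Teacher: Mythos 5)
Your proposal is correct and follows essentially the same route as the paper: reduce biconservativity to $\langle H,\eta\rangle=0$ via Lemma \ref{LemmaSpacelikeTRRH}, extract $h^4_{11}=h^4_{12}=0$ by taking the $e_4$-component of \eqref{SpaceLikeTNablaXpdte1N}--\eqref{SpaceLikeTNablaXpdte2N} using $\langle\nabla^\perp_X e_3,e_4\rangle=0$, and obtain \eqref{PropSpacelikeBicCondEq3} from the Ricci equation together with $\tr A_\xi=0$. Your explicit appeal to the standing assumption $T\neq0$ (so $\sinh\theta\neq0$) makes a step precise that the paper leaves implicit, but the argument is the same.
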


\begin{proof}
In order to prove the necessary condition, we assume that $M$ is biconservative and PMCV. Considering vector fields $e_1,e_3$ defined by \eqref{RWTS-etaandTDefNew}. Then, there exists a non-zero constant $H_0$ and a unit normal vector field $e_4$ such that $H=H_0e_4$ and \eqref{PropSpacelikeBicCondEq1}. Let $\xi$ be a normal vector field such that $\widetilde g(\xi,e_4)=0$  from which we have
\begin{equation}\label{PropSpacelikeBicProofEq2}
\tr A_{\xi}=0,\qquad \tr A_{e_4}=2H_0.
\end{equation}

Note that because of $\nabla^\perp e_4=0$, we have $$\langle\nabla^\perp_X e_3,e_4\rangle=0$$ whenever $X$ is a tangent vector field. By combining this equation with  \eqref{SpaceLikeTNablaXpdte1N} and \eqref{SpaceLikeTNablaXpdte2N} and taking into account \eqref{PropSpacelikeBicProofEq2}, we obtain $$h^4_{11}=h^4_{12}=0, \qquad h^4_{22}=2H_0.$$ 
Therefore, we have \eqref{PropSpacelikeBicCondEq2}.
Moreover, the Ricci equation \eqref{RicciLn1fc} implies
$$0=R^D(e_1,e_2)e_4=h(e_1,A_{e_4}e_2)-h(A_{e_4}e_1,e_2)=2H_0h(e_1,e_2).$$
By combining this equation with the first equation  in \eqref{PropSpacelikeBicProofEq2}, we get \eqref{PropSpacelikeBicCondEq3}.

Conversely, if \eqref{PropSpacelikeBicCondEq2} and \eqref{PropSpacelikeBicCondEq3} are satisfied, then we have $H=H_0 e_4$. Furthermore, \eqref{PropSpacelikeBicCondEq1} implies $\nabla^\perp H=0$ and \eqref{PropSpacelikeBicProofEq1}. This completes the proof.
\end{proof}


Next, by using this proposition, we obtain the following properties of biconservative PMCV surfaces.
\begin{lemma}\label{CorryofPropNonDegeBicCond}
Let $M$ be a space-like biconservative PMCV surface in $L^n_1(f,c)$ and $p\in M$. Then the vector fields $e_1,\ e_2$ and $e_3$ defined by \eqref{RWTS-etaandTDefNew} satisfy
\begin{subequations}\label{CorryofPropNonDegeBicCondAll}
\begin{eqnarray}
\label{CorryofPropNonDegeBicCond0} h(e_1,e_2)&=&0,\\
\label{CorryofPropNonDegeBicCond1} \nabla_{e_1}e_1=\nabla_{e_1}e_2=\nabla_{e_2}e_1=\nabla_{e_2}e_2&=&0,\\
\label{LemmaL41f0SPLEq5} \frac{f'}{f}=\cosh\theta\gamma_{e_3}  ,\qquad&& e_2(\theta)=0.
\end{eqnarray}
Consequently, there exists a local coordinate system $(\mathcal N_p,(u,v))$ such that $\mathcal N_p\ni p$ and 
\begin{eqnarray}
\label{CorryofPropNonDegeBicCond3}  \left.e_1\right|_{\mathcal N_p}=-\sinh\theta\partial_u, \qquad \left.e_2\right|_{\mathcal N_p}=\partial_v.
\end{eqnarray}
\end{subequations}

\end{lemma}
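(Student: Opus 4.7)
The plan is to combine Proposition \ref{PropNonDegeBicCond} with the decomposition identities \eqref{SpaceLikeTNablaXpdte1ALL} and the Codazzi equation \eqref{CodazziLn1fc2}. Since $\eta = \cosh\theta\,e_3$ and the proposition supplies a parallel unit normal $e_4$ orthogonal to $\eta$, the timelike normal $e_3$ is automatically orthogonal to $e_4$; hence $A_{e_3}$ has the form $\diag(\gamma_{e_3}, -\gamma_{e_3})$ in the basis $(e_1, e_2)$. The relation $h(e_1,e_2)=0$, i.e.\ \eqref{CorryofPropNonDegeBicCond0}, already appears inside the proof of that proposition via the Ricci equation.

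Next I would extract connection data directly from \eqref{SpaceLikeTNablaXpdte1ALL}. Substituting $A_{e_3}e_1 = \gamma_{e_3}e_1$ into \eqref{SpaceLikeTNablaXpdte1T} and splitting along $e_1, e_2$: the $e_2$-component, together with $\sinh\theta\neq 0$, forces $\nabla_{e_1}e_1 = 0$, and orthonormality then gives $\nabla_{e_1}e_2 = 0$. Substituting $A_{e_3}e_2 = -\gamma_{e_3}e_2$ into \eqref{SpaceLikeTNablaXpdte2T}, the $e_1$-component reads $e_2(\theta)\cosh\theta = 0$, so $e_2(\theta) = 0$. At this point only $\mu := \langle \nabla_{e_2}e_1, e_2\rangle$ remains undetermined (and $\nabla_{e_2}e_2 = -\mu e_1$).

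To force $\mu = 0$ I would use Codazzi \eqref{CodazziLn1fc2}. With $h(e_1,e_2)=0$ and the three already-established vanishing connection terms, the two covariant-derivative pieces simplify to $(\nabla^\perp_{e_1}h)(e_2,e_2) = \nabla^\perp_{e_1}h(e_2,e_2)$ and $(\nabla^\perp_{e_2}h)(e_1,e_2) = \mu\bigl(h(e_1,e_1) - h(e_2,e_2)\bigr)$. Pairing the resulting equation with $e_4$: the left-hand side vanishes since $e_3 \perp e_4$; on the right-hand side $\langle h(e_1,e_1) - h(e_2,e_2), e_4\rangle = -2H_0$ by \eqref{PropSpacelikeBicCondEq2}, while $\langle \nabla^\perp_{e_1}h(e_2,e_2), e_4\rangle = e_1(2H_0) = 0$ because $e_4$ is parallel and $H_0$ is constant. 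Thus $2\mu H_0 = 0$, so $\mu = 0$. Feeding $\mu = 0$ and $e_2(\theta) = 0$ back into the $e_2$-component of \eqref{SpaceLikeTNablaXpdte2T} then gives $f'/f = \cosh\theta\,\gamma_{e_3}$, completing \eqref{LemmaL41f0SPLEq5}.

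Finally, the coordinate system is a one-line Frobenius argument: $[e_1, e_2] = 0$ follows from $\nabla_{e_i}e_j = 0$, and since $e_2(\theta) = 0$ a short Leibniz computation gives $[-e_1/\sinh\theta,\, e_2] = 0$, so local coordinates $(u, v)$ near $p$ exist with $\partial_u = -e_1/\sinh\theta$ and $\partial_v = e_2$. The main technical obstacle is the Codazzi step: the decomposition identities \eqref{SpaceLikeTNablaXpdte1ALL} alone pin down only three of the four $\nabla_{e_i}e_j$, and closing the last one requires the parallelism of $e_4$ fed through the Codazzi equation.
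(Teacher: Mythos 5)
Your proposal is correct and follows essentially the same route as the paper: obtain $A_{e_3}=\diag(\gamma_{e_3},-\gamma_{e_3})$ from Proposition \ref{PropNonDegeBicCond}, read off $\nabla_{e_1}e_1=0$ and $e_2(\theta)=0$ from \eqref{SpaceLikeTNablaXpdte1T}--\eqref{SpaceLikeTNablaXpdte2T}, kill the remaining connection coefficient by pairing the Codazzi equation \eqref{CodazziLn1fc2} with the parallel normal $e_4$, and conclude with the commuting-frame argument for the coordinates. The only difference is cosmetic ordering of the steps; the key identities used are identical to the paper's.
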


\begin{proof}
Let $M$ be biconservative and PMCV. Note that \eqref{CorryofPropNonDegeBicCond0} directly follows from \eqref{PropSpacelikeBicCondEq2} and \eqref{PropSpacelikeBicCondEq3}. 

From \eqref{PropSpacelikeBicProofEq1} and  \eqref{PropSpacelikeBicCondEq3} we have 
\begin{equation}\label{CorryofPropNonDegeBicCondPrEq0}
A_{e_3}e_1=\gamma_{e_3}e_1,\qquad A_{e_3}e_2=-\gamma_{e_3}e_2.
\end{equation} 
The first equation in \eqref{CorryofPropNonDegeBicCondPrEq0} and \eqref{SpaceLikeTNablaXpdte1T}  imply $\nabla_{e_1}e_1=0$. Moreover, the Codazzi equation \eqref{CodazziLn1fc2} and \eqref{CorryofPropNonDegeBicCond0} imply
\begin{equation}\label{CorryofPropNonDegeBicCondPrEq1}
\sinh\theta\cosh\theta\left(-\frac {f''}f+\frac{f'{}^2+c}{f^2}\right)e_3=\nabla^\perp_{e_1}h(e_2,e_2)+h(\nabla_{e_2}e_1,e_2)+ h(e_1,\nabla_{e_2}e_2).
\end{equation}
By taking the inner product of both sides of \eqref{CorryofPropNonDegeBicCondPrEq1} with $e_4$ and using \eqref{PropSpacelikeBicCondEq1} and \eqref{PropSpacelikeBicCondEq2}, we obtain
$$
H_0 \langle \nabla_{e_2} e_1, e_2 \rangle= 0.
$$
Thus, we have \eqref{CorryofPropNonDegeBicCond1}. From \eqref{SpaceLikeTNablaXpdte2T} and  \eqref{CorryofPropNonDegeBicCond1} we get the first equation in \eqref{LemmaL41f0SPLEq5}.

Finally, by considering \eqref{CorryofPropNonDegeBicCond1} and \eqref{LemmaL41f0SPLEq5}, we obtain
$$[-\frac 1{\sinh\theta} e_1, e_2]=0.$$
Therefore, there exists a local coordinate system $(u,v)$ such that $-\frac 1{\sinh\theta} e_1 = \partial_u$ and $e_2 = \partial_v$, which yields \eqref{CorryofPropNonDegeBicCond3}.
\end{proof}

On the other hand, as a result of Ricci equation \eqref{RicciLn1fc}, a surface has flat normal bundle if all of its shape operators can be diagonalized simultaneously (See \cite[Proposition 3.1]{ChenJVanderVeken2007}). Hence, we have the following result.
\begin{corry}\label{CorryFlatNB}
If $M$ is a biconservative PMCV surface in $L^n_1(f,c)$, then it has flat normal bundle.
\end{corry}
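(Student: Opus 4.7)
The plan is to invoke Proposition \ref{PropNonDegeBicCond} to exhibit an orthonormal tangent frame in which \emph{every} shape operator is simultaneously diagonal, and then appeal to the standard consequence of the Ricci equation \eqref{RicciLn1fc} that simultaneous diagonalizability of all shape operators forces $R^\perp \equiv 0$.

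More concretely, let $M$ be a space-like biconservative PMCV surface in $L^n_1(f,c)$. By the remark preceding Proposition \ref{PropNonDegeBicCond}, we may assume $\eta \neq 0$ on $M$ (on the open set where $\eta=0$ the surface is a horizontal slice, which has flat normal bundle trivially). Take the frame $\{e_1,e_2\}$ defined via \eqref{RWTS-etaandTDefNew} and the unit normal $e_4$ in the direction of $H$ furnished by Proposition \ref{PropNonDegeBicCond}. Then \eqref{PropSpacelikeBicCondEq2} gives $A_{e_4}$ diagonal in $\{e_1,e_2\}$, and \eqref{PropSpacelikeBicCondEq3} gives $A_\xi$ diagonal in the same basis for every normal $\xi$ perpendicular to $e_4$. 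By linearity, $A_\zeta$ is diagonal in $\{e_1,e_2\}$ for every normal vector field $\zeta$, since any $\zeta$ decomposes as $\langle\zeta,e_4\rangle e_4 + \zeta'$ with $\langle\zeta',e_4\rangle = 0$.

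Now I would plug this simultaneous diagonalization into the Ricci equation \eqref{RicciLn1fc}: for arbitrary normal $\xi$,
\[
R^\perp(e_1,e_2)\xi \;=\; h(e_1,A_\xi e_2)-h(A_\xi e_1,e_2).
\]
Since $A_\xi e_i = \lambda_i^\xi e_i$ for scalars $\lambda_i^\xi$, both terms on the right equal $\lambda_j^\xi h(e_1,e_2)$, which vanish because $h(e_1,e_2)=0$ by \eqref{CorryofPropNonDegeBicCond0}. Hence $R^\perp(e_1,e_2)\xi = 0$ for every normal $\xi$, and since $\{e_1,e_2\}$ spans $TM$, we conclude $R^\perp \equiv 0$.

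There is really no obstacle here: the corollary is a clean bookkeeping consequence of Proposition \ref{PropNonDegeBicCond} combined with \eqref{CorryofPropNonDegeBicCond0}. The only point worth stating explicitly is that the simultaneous diagonality established in the proposition holds for a \emph{fixed} frame $\{e_1,e_2\}$ that does not depend on the chosen normal direction, which is precisely what is needed to apply the cited criterion \cite[Proposition 3.1]{ChenJVanderVeken2007}.
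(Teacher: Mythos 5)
Your proposal is correct and follows essentially the same route as the paper: the paper likewise deduces flatness of the normal bundle from the simultaneous diagonalization of all shape operators in the frame $\{e_1,e_2\}$ provided by Proposition \ref{PropNonDegeBicCond}, via the Ricci equation \eqref{RicciLn1fc} (citing \cite[Proposition 3.1]{ChenJVanderVeken2007}). The only cosmetic remark is that the two terms in $R^\perp(e_1,e_2)\xi$ are $\lambda_2^\xi h(e_1,e_2)$ and $\lambda_1^\xi h(e_1,e_2)$ rather than a common value, but both vanish since $h(e_1,e_2)=0$, and your case $\eta=0$ never occurs for a space-like surface since $\eta$ is the time-like part of $\partial/\partial t$.
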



\section{PMCV Surfaces in $L^4_1(f,0)$} 
In this section, we are going to consider space-like PMCV surfaces in the Robertson-Walker spacetime $L^4_1(f,0)$.

First, we obtain the following lemma by using Proposition \ref{PropNonDegeBicCond}.
\begin{lemma}\label{LemmaL41f0SPL}
Let $M$  be a space-like surface in $L^4_1(f,c)$ and $\{e_1,e_2;e_3,e_4\}$ be the orthonormal frame field defined by \eqref{RWTS-etaandTDefNew}. If $M$ is  PMCV and biconservative, then
\begin{subequations}\label{LemmaL41f0SPLALLEqs}
\begin{eqnarray}
\label{LemmaL41f0SPLEq1}\widetilde\nabla_{e_1}e_1=-\gamma e_3 ,&\qquad& \widetilde\nabla_{e_2}e_1=0,\\
\label{LemmaL41f0SPLEq2}\widetilde\nabla_{e_1}e_2=0 ,&\qquad& \widetilde\nabla_{e_2}e_2=\gamma e_3+2H_0e_4,\\
\label{LemmaL41f0SPLEq3}\widetilde\nabla_{e_1}e_3=-\gamma e_1 ,&\qquad& \widetilde\nabla_{e_2}e_3=\gamma e_2,\\
\label{LemmaL41f0SPLEq4}\widetilde\nabla_{e_1}e_4=0 ,&\qquad& \widetilde\nabla_{e_2}e_4=-2H_0e_2.
\end{eqnarray}
\end{subequations}
\end{lemma}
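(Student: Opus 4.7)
The plan is to read off all eight formulas directly from the Gauss and Weingarten decompositions by feeding in the structural data already established in Proposition \ref{PropNonDegeBicCond} and Lemma \ref{CorryofPropNonDegeBicCond}, so essentially the lemma is a bookkeeping corollary. Before writing anything down, I would first fix the causal characters of $e_3$ and $e_4$: squaring \eqref{RWTS-etaandTDefNew} and using $\langle e_1,e_3\rangle=0$ gives $\langle e_3,e_3\rangle=-1$, and since $\eta=\cosh\theta\,e_3$, the condition $\langle e_4,\eta\rangle=0$ in \eqref{PropSpacelikeBicCondEq1} forces $\langle e_4,e_3\rangle=0$, while $e_4$ must be unit space-like (as noted in the corollary after Lemma \ref{LemmaSpacelikeTRRH}, $H$ is space-like, and $H=H_0 e_4$). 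So $\{e_3,e_4\}$ is an orthonormal frame of the normal bundle with signature $(-,+)$, and the expansion of the second fundamental form reads $h(e_i,e_j)=-h^3_{ij}e_3+h^4_{ij}e_4$.

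Next, I would compute the four tangential derivatives $\widetilde\nabla_{e_i}e_j$ via Gauss: by \eqref{CorryofPropNonDegeBicCond1} we have $\nabla_{e_i}e_j=0$, so $\widetilde\nabla_{e_i}e_j=h(e_i,e_j)$. Plugging in the shape operator data from \eqref{PropSpacelikeBicCondEq2} and \eqref{PropSpacelikeBicCondEq3} (with $\gamma:=\gamma_{e_3}$) gives $h^3_{11}=\gamma$, $h^3_{22}=-\gamma$, $h^3_{12}=h^4_{11}=h^4_{12}=0$, $h^4_{22}=2H_0$, and hence the four formulas in \eqref{LemmaL41f0SPLEq1}--\eqref{LemmaL41f0SPLEq2}; the minus sign in $\widetilde\nabla_{e_1}e_1=-\gamma e_3$ is precisely the time-like sign I flagged above, so it is important to keep that careful.

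Then I would handle the two normal derivatives $\widetilde\nabla_{e_i}e_3$ and $\widetilde\nabla_{e_i}e_4$ via Weingarten. For $e_4$ the formula $\widetilde\nabla_X e_4=-A_{e_4}X+\nabla^\perp_X e_4$ collapses to $-A_{e_4}X$ because $\nabla^\perp e_4=0$ by the PMCV assumption in \eqref{PropSpacelikeBicCondEq1}; plugging in \eqref{PropSpacelikeBicCondEq2} yields \eqref{LemmaL41f0SPLEq4}. For $e_3$ I need $\nabla^\perp e_3=0$: since $\langle e_3,e_3\rangle=-1$ is constant and metric compatibility of $\nabla^\perp$ gives $\langle\nabla^\perp_X e_3,e_3\rangle=0$, while $\langle\nabla^\perp_X e_3,e_4\rangle=-\langle e_3,\nabla^\perp_X e_4\rangle=0$ from $\nabla^\perp e_4=0$, so in the rank-two normal bundle $\nabla^\perp_X e_3$ vanishes identically. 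Then Weingarten and \eqref{PropSpacelikeBicCondEq3} give \eqref{LemmaL41f0SPLEq3}.

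There is no real obstacle here: the entire content is already packaged in the previous proposition and corollary, and the codimension-two setting (the defining feature of being inside $L^4_1(f,c)$) is what makes the argument $\nabla^\perp e_3=0$ so short, since once the single direction $e_4$ is pinned down, the orthogonal complement in the normal bundle is automatically one-dimensional and parallel. The only place where care is needed is the sign coming from the time-like character of $e_3$, both in the Gauss formula expansion of $h$ and in the compatibility computation for $\nabla^\perp e_3$.
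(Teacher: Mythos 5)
Your proposal is correct and follows essentially the same route as the paper: read off \eqref{LemmaL41f0SPLEq1}--\eqref{LemmaL41f0SPLEq2} from the Gauss formula using $\nabla_{e_i}e_j=0$ from \eqref{CorryofPropNonDegeBicCond1} and the shape operators of Proposition \ref{PropNonDegeBicCond}, and get \eqref{LemmaL41f0SPLEq3}--\eqref{LemmaL41f0SPLEq4} from Weingarten after noting that codimension two plus $\nabla^\perp e_4=0$ forces $\nabla^\perp e_3=0$. Your extra care with the time-like sign of $e_3$ (so that $h(e_1,e_1)=-\gamma e_3$ rather than $+\gamma e_3$) is exactly the point the paper handles via \eqref{AhRelatedBy}, and your explicit metric-compatibility argument for $\nabla^\perp e_3=0$ just spells out what the paper asserts in one line.
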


\begin{proof}
Let $M$ be  PMCV and biconservative. Then, Proposition \ref{PropNonDegeBicCond} implies \eqref{PropSpacelikeBicCondEq1}, \eqref{PropSpacelikeBicCondEq2}, \eqref{PropSpacelikeBicCondEq3} and we have \eqref{CorryofPropNonDegeBicCond1}  because of Lemma \ref{CorryofPropNonDegeBicCond}. By combining \eqref{PropSpacelikeBicCondEq1} and \eqref{PropSpacelikeBicCondEq2}, we obtain \eqref{LemmaL41f0SPLEq4}. Since the co-dimension of $M$ is 2, \eqref{PropSpacelikeBicCondEq1} also implies $\nabla^\perp e_3=0$.

On the other hand, from \eqref{PropSpacelikeBicCondEq3} for $\xi=e_3$ we get
\begin{equation}\label{LemmaL41f0SPLEqProof1}
A_{e_3}=\left(\begin{array}{cc}\gamma&0\\0&-\gamma\end{array}\right),
\end{equation}
where we put $\gamma_{e_3}=\gamma$.   
Therefore, from \eqref{LemmaL41f0SPLEqProof1} we obtain \eqref{LemmaL41f0SPLEq3}. Note that by using \eqref{AhRelatedBy}, \eqref{PropSpacelikeBicCondEq3} and \eqref{LemmaL41f0SPLEqProof1},  we obtain the second fundemental form $h$ by 
\begin{equation}\label{LemmaL41f0SPLEqProof1b}
h(e_1,e_1)=-\gamma e_3,\qquad h({e_2},e_2)=\gamma e_3+2H_0e_4.
\end{equation}
By combining \eqref{CorryofPropNonDegeBicCond0}, \eqref{CorryofPropNonDegeBicCond1} and  \eqref{LemmaL41f0SPLEqProof1b}, we get \eqref{LemmaL41f0SPLEq1} and \eqref{LemmaL41f0SPLEq2}. 
\end{proof}


Next,  we obtain a local parametrization for space-like biconservative surfaces. 

\begin{lemma}\label{Lemma2L41f0SPL}
Let $M$  be a space-like surface in $L^4_1(f,0)$, $p\in M$ and $\{e_1,e_2;e_3,e_4\}$ be the orthonormal frame field defined by \eqref{RWTS-etaandTDefNew}. If $M$ is PMCV and biconservative, then $p$ has a neighborhood $\mathcal N_p$ parametrized by 
\begin{equation}
\label{Lemma2L41f0SEq1}\phi(u,v)=\left(u,\frac1{af(u)}\sin a v,\frac1{af(u)}\cos a v,y(u)\right)
\end{equation}
and we have
\begin{equation}
\label{Lemma2L41f0SEq3}e_4=\frac 1f\left(0,-\frac{2 H_0 }{a}\sin a v,-\frac{2 H_0 }{a}\cos a v,c_2\right),
\end{equation}
for some constants $a,c_2$ satisfying 
\begin{equation}
\label{Lemma2L41f0LEq4}4H_0^2+c_2^2a^2=a^2, \qquad c_2>0,
\end{equation}
where $H_0$ is the mean curvature of $M$ and $\phi$ is the position vector of $M$.
\end{lemma}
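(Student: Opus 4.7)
The strategy is to exploit the parallel unit normal $e_4$ provided by Proposition \ref{PropNonDegeBicCond} and Lemma \ref{LemmaL41f0SPL}, and to integrate in the ambient Cartesian coordinates of $I\times_f\mathbb{E}^3$. By Lemma \ref{CorryofPropNonDegeBicCond} there is a chart $(u,v)$ on a neighborhood $\mathcal{N}_p$ with $e_1=-\sinh\theta\,\partial_u$ and $e_2=\partial_v$; since $e_2$ has no $\tfrac{\partial}{\partial t}$-component while $e_1$ has $\tfrac{\partial}{\partial t}$-component $-\sinh\theta$, reading off the $t$-component of $\phi$ yields $\partial_u\phi_0=1$ and $\partial_v\phi_0=0$, so after a translation in the $t$-direction one may assume $\phi_0(u,v)=u$.

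Next, I would combine the connection identities of Lemma \ref{LemmaL41f0SPL} with the warped-product formula of Lemma \ref{LemmaLn1f0LCConnect}. The condition $\widetilde\nabla_{e_1}e_4=0$, together with $(e_4)_0=0$ (which follows from $\langle e_4,\eta\rangle=0$), reduces to the spatial ODE $\partial_u e_4^j=-(f'/f)\,e_4^j$ for each Euclidean component $j=1,2,3$; integration gives $e_4^j(u,v)=K^j(v)/f(u)$ for some smooth functions $K^j$. Plugging this into $\widetilde\nabla_{e_2}e_4=-2H_0 e_2$, which analogously reduces to $\partial_v e_4^j=-2H_0\,\partial_v\phi^j$, and integrating in $v$ yields
\begin{equation*}
\phi^j(u,v)=-\frac{K^j(v)}{2H_0\,f(u)}+D^j(u)
\end{equation*}
for integration functions $D^j(u)$. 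The unit conditions $|e_4|=1$ and $|e_2|=1$ then translate to $|K(v)|_{\mathbb{E}^3}=1$ and $|K'(v)|_{\mathbb{E}^3}=2|H_0|$.

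The remaining identity $\widetilde\nabla_{e_2}e_2=\gamma e_3+2H_0 e_4$ closes the system. Writing $\bar e_3=(\sinh^2\theta/\cosh\theta)\,\partial_u\bar\phi$ from \eqref{RWTS-etaandTDefNew} and using $\gamma\cosh\theta=f'/f$ from Lemma \ref{CorryofPropNonDegeBicCond}, this identity becomes the vector equation
\begin{equation*}
-K''(v)=\Bigl(\tfrac{f'{}^2}{f^2}\tanh^2\theta+4H_0^2\Bigr)K(v)+2H_0\,f'\tanh^2\theta\,D'(u)
\end{equation*}
in $\mathbb{E}^3$. Differentiating in $v$ gives $K'''(v)+A(u)K'(v)=0$; since $K'$ depends only on $v$ and is nowhere zero while $A$ depends only on $u$, $A$ must be a positive constant $a^2$, whence $(f'/f)^2\tanh^2\theta+4H_0^2=a^2$. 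The remaining term $2H_0 f'\tanh^2\theta\,D'(u)$ is then also forced to be a constant vector $V\in\mathbb{E}^3$, so $K$ satisfies $K''+a^2K=-V$ and traces a circle. The constraints $|K|=1$, $|K'|=2|H_0|$ pin down the radius and speed and yield the scalar relation $4H_0^2+c_2^2a^2=a^2$ with $c_2:=|V|/a^2>0$. A rigid motion in $\mathbb{E}^3$ (an isometry of $\widetilde g$, since $f$ depends only on $t$) aligns the circle to put $K$ in the explicit form $(-2H_0\sin(av)/a,\,-2H_0\cos(av)/a,\,c_2)$, which is \eqref{Lemma2L41f0SEq3}. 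Finally, $\widetilde\nabla_{e_2}e_1=0$ gives $\sum_j K^j{}'(v)D^j{}'(u)=0$, which with the explicit form of $K'$ forces $D^1{}'=D^2{}'=0$; a constant translation in the $x^1,x^2$-directions absorbs $D^1,D^2$, and setting $y(u):=D^3(u)-c_2/(2H_0 f(u))$ establishes \eqref{Lemma2L41f0SEq1}. The main obstacle is the third paragraph --- separating the joint $u,v$-dependence in the $\widetilde\nabla_{e_2}e_2$-equation to extract the constant $a^2$ and realize $K$ as a circle; the other steps reduce to routine integration and coordinate normalization.
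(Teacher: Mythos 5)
Your proposal is correct and rests on the same underlying strategy as the paper's proof: take the chart of Lemma \ref{CorryofPropNonDegeBicCond} with $e_1=-\sinh\theta\,\partial_u$, $e_2=\partial_v$, and integrate the Frenet-type system of Lemma \ref{LemmaL41f0SPL} in the ambient Cartesian coordinates of $I\times_f\mathbb E^3$ via Lemma \ref{LemmaLn1f0LCConnect}. The difference is in the ordering and in which identity produces the circle. The paper first uses $\widetilde\nabla_{e_2}e_1=0$ to get $e_2=\tfrac1f(0,\alpha(v))$ with $\alpha$ unit, then extracts the harmonic-oscillator equation $\alpha''+a^2\alpha=0$ from the \emph{homogeneous third-order} identity $\widetilde\nabla_{e_2}\widetilde\nabla_{e_2}e_2=(\gamma^2-4H_0^2)e_2$ (separation of variables gives $a^2=\gamma^2\sinh^2\theta+4H_0^2$), and only afterwards integrates the $e_4$-equations. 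You instead integrate the $e_4$-equations first, recover $\phi$ from $\partial_v e_4=-2H_0\,\partial_v\phi$, and extract the circle from the \emph{inhomogeneous second-order} identity $\widetilde\nabla_{e_2}e_2=\gamma e_3+2H_0e_4$, which costs you the extra separation step for the constant vector $V$ but yields the same constant $a^2$ and the same relation $4H_0^2+c_2^2a^2=a^2$. Both routes work. One slip worth fixing: the relation $\sum_j K^{j\prime}(v)D^{j\prime}(u)=0$ does \emph{not} follow from $\widetilde\nabla_{e_2}e_1=0$ --- once $\widetilde\phi=-K(v)/(2H_0f)+D(u)$, that connection equation is automatically satisfied --- it follows from the orthogonality $\langle e_1,e_2\rangle=0$ combined with $K\cdot K'=\tfrac12(|K|^2)'=0$; the conclusion $D^{1\prime}=D^{2\prime}=0$ is unaffected. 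You could also note that $V\neq0$ (hence $c_2>0$) is forced by the standing assumption $f''/f\neq f'^2/f^2$, which guarantees $\gamma\neq0$ and so $a^2>4H_0^2$.
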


\begin{proof}
Let $M$  be  space-like  PMCV and biconservative. Then, because of Lemma \ref{LemmaL41f0SPL}, the equations appearing in \eqref{LemmaL41f0SPLALLEqs} is satisfied. Consider a local coordinate system $(\mathcal N_p,(u,v))$ described in Lemma \ref{CorryofPropNonDegeBicCond} near to $p$ and let
\begin{equation}\label{Lemma2L41f0SPLEq1}
\phi(u,v)=(\mathcal T(u,v),\widetilde\phi(u,v))
\end{equation}
be the position vector of $\mathcal N_p$. Because of the definitions of $e_1$ and $e_2$, from \eqref{CorryofPropNonDegeBicCond3} we have $\frac{\partial}{\partial u}\mathcal T=1,\ \frac{\partial}{\partial v}\mathcal T=0$ which imply
\begin{equation} \label{Lemma2L41f0SPLEq2}
\mathcal T(u,v)=u.
\end{equation}
Note that the second equation in \eqref{LemmaL41f0SPLEq5} implies $\theta=\theta(u)$ on $\mathcal N_p$.

By considering the first equation in \eqref{LemmaL41f0SPLEq2}, we obtain
$$0=\widetilde\nabla_{\partial_v}\partial_u=(0,\widetilde\phi_{uv})+\frac{f'}{f}(0,\widetilde\phi_{v})$$
from which we obtain
\begin{equation} \label{Lemma2L41f0SPLEq3}
e_2=\widetilde\phi_v=\frac 1f(0,\alpha),\qquad g_0(\alpha,\alpha)=1.
\end{equation}
where  $\alpha$ is a smooth $\mathbb R^3$ valued function. Next, by considering Lemma \ref{LemmaLn1f0LCConnect}, we use \eqref{Lemma2L41f0SPLEq1}-\eqref{Lemma2L41f0SPLEq3} to get
\begin{equation} \label{Lemma2L41f0SPLEq4}
\widetilde\nabla_{e_2}\widetilde\nabla_{e_2}e_2=\frac 1f\left(0,\alpha''\right)+\frac{f'^2}{f^2}e_2.
\end{equation}
On the other hand, by a direct computation using \eqref{LemmaL41f0SPLEq2}, \eqref{LemmaL41f0SPLEq3} and \eqref{LemmaL41f0SPLEq4} we get
\begin{equation} \label{Lemma2L41f0SPLEq5}
\widetilde\nabla_{e_2}\widetilde\nabla_{e_2}e_2=\left( \gamma^2-4H_0^2\right)e_2.
\end{equation}
By combining \eqref{Lemma2L41f0SPLEq3}-\eqref{Lemma2L41f0SPLEq5} we get
$$\alpha''(v)=\left(-\gamma(u)^2\sinh^2\theta(u)-4H_0^2\right)\alpha(v).$$
Consequently, we have $\gamma(u)^2\sinh^2\theta(u)+4H_0^2=a^2$ for a constant $a>0$ and $\alpha''+a^2\alpha=0$ from which we have
\begin{equation} \label{Lemma2L41f0SPLEq6}
\alpha(v)=(\cos av,-\sin av,0).
\end{equation}
Finally, by a direct computation using \eqref{Lemma2L41f0SPLEq1}-\eqref{Lemma2L41f0SPLEq3} and \eqref{Lemma2L41f0SPLEq6} we obtain  \eqref{Lemma2L41f0SEq1}.

Next, we consider the first equation in \eqref{LemmaL41f0SPLEq4} to get
$$0=\widetilde\nabla_{\partial_u}e_4=\frac{\partial}{\partial u}e_4+\frac{f'}{f}e_4$$
which implies
$$e_4=\frac 1f\left(0,N_1(v),N_2(v),N_3(v)\right)$$
for some smooth functions $N_1,N_2,N_3$. Furthermore, the second equation of \eqref{LemmaL41f0SPLEq4} implies
$$-2H_0e_2=\widetilde\nabla_{e_2}e_4=\frac 1f\left(0,N_1'(v),N_2'(v),N_3'(v)\right)$$
from which we obtain \eqref{Lemma2L41f0SEq3} because of \eqref{Lemma2L41f0SPLEq3} and $\langle e_2,e_4\rangle=0$.
\end{proof}


Now, we are ready to prove the main result of this section.

\begin{theorem}\label{THML41f0SPL}
The Robertson-Walker spacetime $L^4_1(f,0)$ admits a space-like, biconservative PMCV surface $M$ with mean curvature $H_0$ if and only if $f$ satisfies
\begin{equation}\label{THML41f0SPLEq1}
\left(a^2-4 H_0^2\right) f^3 f''-\left(f'{}^2-\left(a^2-4 H_0^2\right) f^2\right)^2-f'{}^4=0
\end{equation}
for a constant $a$ such that $a^2-4 H_0^2>0$. In this case, $M$ is locally congruent to the rotational surface 
\begin{equation}\label{THML41f0SPLEq2}
\phi(u,v)=\left(u,\frac{1}{a f(u)}\sin a v,\frac{1}{a f(u)}\cos a v,\frac{2 H_0}{a^2 c_2 f(u)}\right),
\end{equation}
where  $c_2$ is a constant satisfying \eqref{Lemma2L41f0LEq4}.
\end{theorem}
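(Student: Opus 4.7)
The plan is to build on Lemma \ref{Lemma2L41f0SPL}, which already supplies the parametrization \eqref{Lemma2L41f0SEq1} with the function $y(u)$ undetermined and the formula \eqref{Lemma2L41f0SEq3} for $e_4$. Two tasks remain: (a) pin down $y(u)$, and (b) extract the ODE governing $f$.

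For (a), I would use the orthogonality relation $\widetilde{g}(e_4,\phi_u)=0$. Substituting \eqref{Lemma2L41f0SEq3} together with the coordinate expression of $\phi_u$ obtained from \eqref{Lemma2L41f0SEq1}, the $\sin av\cdot\sin av + \cos av\cdot\cos av$ cross-terms collapse and one obtains $\tfrac{2H_0 f'}{a^2 f}+c_2 f\,y'(u)=0$, so $y(u)=\tfrac{2H_0}{a^2 c_2 f(u)}$ after absorbing an additive constant into an ambient translation. This gives precisely the expression \eqref{THML41f0SPLEq2}.

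For (b), the only structure equation in \eqref{LemmaL41f0SPLALLEqs} that is \emph{not} yet fully exploited in the proof of Lemma \ref{Lemma2L41f0SPL} is $\widetilde\nabla_{e_1}e_1=-\gamma e_3$. The strategy is to compute $\widetilde\nabla_{\partial_u}\partial_u$ in two different ways and match. Using $\partial_u=-(1/\sinh\theta)e_1$ together with that structure equation, one readily finds
\[
\widetilde\nabla_{\partial_u}\partial_u=\frac{\cosh\theta\,\theta'}{\sinh^2\theta}e_1-\frac{\gamma}{\sinh^2\theta}e_3,
\]
while a direct application of Lemma \ref{LemmaLn1f0LCConnect} to $\phi_u$ (separating the Cartesian-product part $\nabla^0$ from the warping correction $\tfrac{f'}{f}(\cdots)$) produces an explicit ambient-coordinate expression in which the $\partial/\partial t$-component is $\tfrac{f'^3}{a^2 f^3}+f f'y'^2$. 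Equating the $\partial/\partial t$-components and using the relations $\gamma\cosh\theta=f'/f$ (from \eqref{LemmaL41f0SPLEq5}), $\gamma^2\sinh^2\theta=a^2-4H_0^2$ (from the derivation in Lemma \ref{Lemma2L41f0SPL}) and $a^2 c_2^2=a^2-4H_0^2$ (from \eqref{Lemma2L41f0LEq4}) to eliminate $\gamma$, $\theta$ and $y'$ in favor of $f$, the identity should collapse after routine algebra to exactly \eqref{THML41f0SPLEq1}.

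The converse direction is a verification: given $f$ satisfying \eqref{THML41f0SPLEq1} with $a^2-4H_0^2>0$, I would define the immersion by \eqref{THML41f0SPLEq2}, construct the orthonormal frame $\{e_1,e_2;e_3,e_4\}$ via \eqref{RWTS-etaandTDefNew} and \eqref{Lemma2L41f0SEq3}, and check that the structural system \eqref{LemmaL41f0SPLALLEqs} holds term by term; Proposition \ref{PropNonDegeBicCond} then certifies that $M$ is biconservative PMCV with mean curvature $H_0$. The main obstacle I anticipate is the bookkeeping in the connection computation: one must carefully combine $\nabla^0$ with the warping correction, avoid premature cancellation of factors of $f'$ or $\sinh\theta$ (which would disguise the ODE as a tautology), and track the sign ambiguity inherent in $e_1=-\sinh\theta\,\partial_u$. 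A secondary point is to confirm that the $\partial/\partial t$-component already suffices—the spatial components will yield the same ODE by the consistency relations above, but matching them is where most of the algebra is hidden.
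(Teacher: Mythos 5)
Your proposal is correct and follows essentially the same route as the paper: $y$ is determined from $\langle e_1,e_4\rangle=0$ exactly as in the paper's proof, and your matching of $\widetilde\nabla_{e_1}e_1$ against $-\gamma e_3$ (with $\gamma\cosh\theta=f'/f$ and $\gamma^2\sinh^2\theta=a^2-4H_0^2$) is just a repackaging of the paper's step of computing $H$ from $\widetilde\nabla_{e_1}e_1+\widetilde\nabla_{e_2}e_2$ and imposing $\langle H,\eta\rangle=0$. The one sign to watch is that $e_3$ is timelike, so $\langle e_3,\partial/\partial t\rangle=-\cosh\theta$; with that convention your $\partial/\partial t$-component match does collapse to \eqref{THML41f0SPLEq1}.
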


\begin{proof}
First, we are going to prove the necessary condition. Assume the existence of a space-like biconservative PMCV surface  $M$ in $L^4_1(f,0)$. Then  \eqref{PropSpacelikeBicProofEq1} is satisfied and we are going to consider a local coordinate system $(u,v)$ 
described in  Lemma \ref{Lemma2L41f0SPL}. Note that \eqref{Lemma2L41f0SEq1} and \eqref{Lemma2L41f0SEq3} implies
\begin{equation}\label{THML41f0SPLProofEq1}
a^2 c_2 f^2 y'+2 H_0 f'=0
\end{equation}
because $\langle e_1,e_4\rangle=0.$ By solving \eqref{THML41f0SPLProofEq1}, we  get
$$y(u)=\frac{2 H_0}{a^2 c_2 f(u)}+c_3$$
for a constant $c_3$ which can be assumed to be zero after a suitable translation. Therefore, we have \eqref{THML41f0SPLEq2}. 

By using  \eqref{CorryofPropNonDegeBicCond3} and \eqref{Lemma2L41f0SEq1} we get 
$$e_3=\frac{1}{\sqrt{f'{}^2-\left(a^2-4 H_0^2\right) f^2}}\left( {f'} ,- {a c_2^2 \sin  av } ,- {a c_2^2 \cos  av } ,- {2 c_2 H_0} \right)$$
from which we obtain
\begin{equation}\label{THML41f0SPLProofEq2}
\widetilde\nabla_{e_1}e_1=\frac{f'^4-\left(a^2-4 H_0^2\right) f^3 f''}{f \left(f'{}^2-\left(a^2-4 H_0^2\right) f^2\right){}^{3/2}}e_3.
\end{equation}
On the other hand, \eqref{CorryofPropNonDegeBicCond3} and \eqref{Lemma2L41f0LEq4} implies
\begin{equation}\label{THML41f0SPLProofEq3}
\widetilde\nabla_{e_2}e_2=\frac{\sqrt{f'{}^2-\left(a^2-4 H_0^2\right) f^2}}{f}e_3+2H_0e_4.
\end{equation}
By combining \eqref{THML41f0SPLProofEq2} and \eqref{THML41f0SPLProofEq3}, we get the mean curvature vector $H$ of $M$ by
\begin{equation*}
H=-\frac{\left(a^2-4 H_0^2\right) f^3 f''+2 \left(a^2-4 H_0^2\right) f^2 f'{}^2-\left(a^2-4 H_0^2\right){}^2 f^4-2 f'^4}{2f \left(f'{}^2-\left(a^2-4 H_0^2\right) f^2\right){}^{3/2}}e_3+H_0e_4.
\end{equation*}
Hence, \eqref{PropSpacelikeBicProofEq1} is equivalent to \eqref{THML41f0SPLEq1}.

The proof of the sufficient condition follows from a direct computation.
\end{proof}


\section{PMCV Surfaces in $L^5_1(f,0)$} 
In this section, we are going to consider biconservative PMCV surfaces in the Robertson-Walker space $L^5_1(f,0)$. 

Let $M$ be a space-like biconservative PMCV surface in $L^5_1(f,0)$. We choose the orthonormal frame field $\{e_1,e_2;e_3,e_4,e_5\}$, where $e_1,e_2,e_3$ are defined by \eqref{RWTS-etaandTDefNew} and $e_4$ is proportional to $H$, i.e., $H=H_0 e_4$. Then, since $e_4$ is parallel, we have
\begin{eqnarray}\label{SurfacesinL51fcEq0}
\nabla^\perp{e_4}=0,\qquad \langle\nabla_{e_1}^\perp{e_3},e_4\rangle=\langle\nabla_{e_1}^\perp{e_5},e_4\rangle=0.
\end{eqnarray}
 Proposition \ref{PropNonDegeBicCond} implies
\begin{eqnarray}\label{SurfacesinL51fcEq1}
A_{e_3}= \left(\begin{array}{cc}\gamma&0\\0&-\gamma\end{array}\right), \qquad A_{e_4}= \left(\begin{array}{cc}0&0\\0&2H_0 \end{array}\right), \qquad A_{e_5}= \left(\begin{array}{cc}\tau&0\\0&-\tau\end{array}\right),
\end{eqnarray}
where, for simplicity, we put $\gamma_{e_3}=\gamma$, $\gamma_{e_5}=\tau$ and $H_0$ denotes the mean curvature of $M$. Note that \eqref{AhRelatedBy} and \eqref{SurfacesinL51fcEq1}  imply
\begin{equation}\label{SurfacesinL51fcEq2}
h(e_1,e_1)=-\gamma e_3+\tau e_5,\qquad h({e_2},e_2)=\gamma e_3+2H_0e_4-\tau e_5
\end{equation}
and we have \eqref{CorryofPropNonDegeBicCond0} because of Lemma \ref{CorryofPropNonDegeBicCond}. By using \eqref{CorryofPropNonDegeBicCond0}, from \eqref{SpaceLikeTNablaXpdte2N} we get
\begin{eqnarray}\label{SurfacesinL51fcEq3}
\nabla_{e_2}^\perp{e_3}=\nabla_{e_2}^\perp{e_5}=0.
\end{eqnarray}
Moreover, \eqref{SpaceLikeTNablaXpdte1N} and \eqref{SurfacesinL51fcEq2} imply
\begin{equation}\label{SurfacesinL51fcEq4}
\nabla^\perp_{e_1}e_3=-\tanh\theta \tau e_5,\qquad \nabla^\perp_{e_1}e_5=-\tanh\theta \tau e_3,
\end{equation}
where the last equation follows from \eqref{SurfacesinL51fcEq0}. By summing up the equations \eqref{SurfacesinL51fcEq0}-\eqref{SurfacesinL51fcEq4}, we obtain the following lemma.
\begin{lemma}\label{LemmaL51fcSPL}
Let $M$  be a space-like surface in $L^5_1(f,0)$ with mean curvature $H_0$ and $\{e_1,e_2;e_3,e_4,e_5\}$ be the orthonormal frame field defined by \eqref{RWTS-etaandTDefNew} and  $H=H_0 e_4$. If $M$ is  PMCV and biconservative, then
\begin{subequations}\label{LemmaL51fcSPLALLEqs}
\begin{eqnarray}
\label{LemmaL51fcSPLEq1}\widetilde\nabla_{e_1}e_1=-\gamma e_3+\tau e_5 ,&\qquad& \widetilde\nabla_{e_2}e_1=0,\\
\label{LemmaL51fcSPLEq2}\widetilde\nabla_{e_1}e_2=0 ,&\qquad& \widetilde\nabla_{e_2}e_2=\gamma e_3+2H_0e_4-\tau e_5,\\
\label{LemmaL51fcSPLEq3}\widetilde\nabla_{e_1}e_3=-\gamma e_1-\tanh\theta \tau e_5,&\qquad& \widetilde\nabla_{e_2}e_3=\gamma e_2,\\
\label{LemmaL51fcSPLEq4}\widetilde\nabla_{e_1}e_4=0 ,&\qquad& \widetilde\nabla_{e_2}e_4=-2H_0e_2,\\
\label{LemmaL51fcSPLEq5}\widetilde\nabla_{e_1}e_5=-\tau e_1-\tanh\theta \tau e_3 ,&\qquad& \widetilde\nabla_{e_2}e_5=\tau e_2.
\end{eqnarray}
\end{subequations}
\end{lemma}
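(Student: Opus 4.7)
The plan is to assemble the Gauss-Weingarten data for a biconservative PMCV surface in the codimension-$3$ setting by combining three inputs already in hand: (a) Proposition \ref{PropNonDegeBicCond}, which fixes the three shape operators; (b) Lemma \ref{CorryofPropNonDegeBicCond}, which kills the tangential Christoffel symbols $\nabla_{e_i} e_j$ for $i,j\in\{1,2\}$; and (c) the identities \eqref{SpaceLikeTNablaXpdte1ALL} for $\widetilde\nabla \partial_t$, together with $\nabla^\perp H=0$, which pin down the remaining normal-connection coefficients.

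Since $H = H_0 e_4$ with $e_4$ parallel and $e_3$ the unit normal from \eqref{RWTS-etaandTDefNew}, Proposition \ref{PropNonDegeBicCond} gives the three shape operators displayed in \eqref{SurfacesinL51fcEq1}; via \eqref{AhRelatedBy} this determines the second fundamental form as in \eqref{SurfacesinL51fcEq2}, in particular $h(e_1,e_2)=0$. Combining these formulas for $h$ with the vanishing of $\nabla_{e_i} e_j$ from Lemma \ref{CorryofPropNonDegeBicCond} and the Gauss formula \eqref{Gauss} yields the target equations \eqref{LemmaL51fcSPLEq1}--\eqref{LemmaL51fcSPLEq2}. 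Likewise, since $\nabla^\perp e_4 = 0$, the Weingarten formula \eqref{Weingarten} applied with $\xi = e_4$ and $A_{e_4}$ from \eqref{SurfacesinL51fcEq1} yields \eqref{LemmaL51fcSPLEq4}.

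For the remaining normal-connection data, I would substitute $h(e_1,e_1) = -\gamma e_3 + \tau e_5$, $h(e_1,e_2)=0$, and $e_2(\theta)=0$ into \eqref{SpaceLikeTNablaXpdte1N}--\eqref{SpaceLikeTNablaXpdte2N} and decompose along the normal frame. The $e_2$-identity immediately forces $\nabla^\perp_{e_2} e_3 = 0$; combining this with $\nabla^\perp e_4 = 0$ and orthonormality of the normal frame gives $\nabla^\perp_{e_2} e_5 = 0$, producing the right-hand columns of \eqref{LemmaL51fcSPLEq3} and \eqref{LemmaL51fcSPLEq5}. Matching the $e_5$-component of \eqref{SpaceLikeTNablaXpdte1N} yields $\langle\nabla^\perp_{e_1} e_3, e_5\rangle = -\tanh\theta\,\tau$, and since $\langle\nabla^\perp_{e_1} e_3, e_4\rangle = 0$ by parallelism of $e_4$, this pins down $\nabla^\perp_{e_1} e_3$; the corresponding $\nabla^\perp_{e_1} e_5$ is then extracted by differentiating $\langle e_3, e_5\rangle = 0$ along $e_1$.

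The only step that requires real care is this last one: because $\langle e_3, e_3\rangle = -1$, the skew-symmetry of $\nabla^\perp$ with respect to the Lorentzian normal metric must be applied with attention to sign, otherwise the $e_3$-coefficient of $\nabla^\perp_{e_1} e_5$ comes out with the wrong sign. Everything else reduces to direct substitution and bookkeeping against the already-established formulas.
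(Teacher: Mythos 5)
Your proposal is correct and follows essentially the same route as the paper: Proposition \ref{PropNonDegeBicCond} for the shape operators, Lemma \ref{CorryofPropNonDegeBicCond} for $h(e_1,e_2)=0$ and $\nabla_{e_i}e_j=0$, the identities \eqref{SpaceLikeTNablaXpdte1N}--\eqref{SpaceLikeTNablaXpdte2N} together with $\nabla^\perp e_4=0$ and metric compatibility for the normal-connection coefficients, assembled via the Gauss and Weingarten formul\ae. Your remark on tracking the sign coming from $\langle e_3,e_3\rangle=-1$ when extracting the $e_3$-coefficient of $\nabla^\perp_{e_1}e_5$ is exactly the one delicate point, and you handle it correctly.
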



Next, by considering Lemma \ref{LemmaL51fcSPL}, we   obtain a local parametrization of $M$.
\begin{lemma}\label{Lemma2L51f0SPL}
Let $M$  be a space-like surface in $L^5_1(f,0)$, $p\in M$.  If $M$ is PMCV and biconservative, then $p$ has a neighborhood $\mathcal N_p$ congruent to the surface parametrized by 
\begin{equation}\label{Lemma2L51f0SPLEq1}
\phi(u,v)=\left(u,x(u)\sin a v,\frac1{af(u)}\cos a v,y(u),z(u)\right),
\end{equation}
 where $a>$ is a constant and $x$ is given by
\begin{equation}\label{Lemma2L51f0SPLEq2a}
x(u)=\frac1{af(u)}
\end{equation}
and $y$, $z$ are some smooth functions satisfying
\begin{equation}\label{Lemma2L51f0SPLEq2}
c_2 y+c_3z-\frac{2H_0}{a}x=0
\end{equation}
for some non-zero constants $c_2,c_3$ such that
\begin{equation}\label{Lemma2L51f0SPLEq3}
c_2^2+c_3^2+\frac{4H_0^2}{a^2}=1.
\end{equation}
\end{lemma}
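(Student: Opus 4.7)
The plan is to adapt the approach of Lemma \ref{Lemma2L41f0SPL}, with the extra normal direction $e_5$ contributing an additional $-\tau^2$ term to the ODE that governs the spatial part of $\phi$. First I would apply Lemma \ref{CorryofPropNonDegeBicCond} to secure a chart $(\mathcal N_p,(u,v))$ with $e_1=-\sinh\theta\,\partial_u$ and $e_2=\partial_v$, and write the position vector as $\phi(u,v)=(\mathcal T(u,v),\widetilde\phi(u,v))$. The relations $\langle\partial/\partial t,e_1\rangle=\sinh\theta$ and $\langle\partial/\partial t,e_2\rangle=0$ force $\mathcal T(u,v)=u$ after a translation in $t$, while $e_2(\theta)=0$ from \eqref{LemmaL41f0SPLEq5} gives $\theta=\theta(u)$. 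Using Lemma \ref{LemmaLn1f0LCConnect}, the identity $\widetilde\nabla_{e_1}e_2=0$ of \eqref{LemmaL51fcSPLEq2} will reduce to $\widetilde\phi_{uv}+(f'/f)\widetilde\phi_v=0$, which integrates to $\widetilde\phi_v=(1/f)(0,\alpha(v))$ with $g_0(\alpha,\alpha)=1$.

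Next I would evaluate $\widetilde\nabla_{e_2}\widetilde\nabla_{e_2}e_2$ in two different ways. Applying the Codazzi equation \eqref{CodazziLn1fc1} to \eqref{SurfacesinL51fcEq2}, and using \eqref{CorryofPropNonDegeBicCond1} together with $\nabla^\perp_{e_2}e_3=\nabla^\perp_{e_2}e_5=0$ from \eqref{SurfacesinL51fcEq3}, I would derive $e_2(\gamma)=e_2(\tau)=0$; the connection formulas in Lemma \ref{LemmaL51fcSPL} then give $\widetilde\nabla_{e_2}\widetilde\nabla_{e_2}e_2=(\gamma^2-4H_0^2-\tau^2)e_2$. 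On the other hand, the parametric computation from Lemma \ref{Lemma2L41f0SPL}, applied verbatim, yields $\widetilde\nabla_{e_2}\widetilde\nabla_{e_2}e_2=(1/f)(0,\alpha'')+(f'^2/f^2)e_2$. Equating the two expressions and invoking $\gamma\cosh\theta=f'/f$ leads to $\alpha''=-(\gamma^2\sinh^2\theta+4H_0^2+\tau^2)\alpha$. Since the left side depends only on $v$ while the coefficient depends only on $u$, both must equal a constant $-a^2$ with $a>0$. Unit-length solutions then take the form $\alpha(v)=C_1\cos av+C_2\sin av$ with $\{C_1,C_2\}$ orthonormal in $\mathbb E^4$; a rotation of the slice lets me normalize $C_1=(1,0,0,0)$ and $C_2=(0,-1,0,0)$, and integrating $\widetilde\phi_v=(1/f)(0,\alpha)$ delivers \eqref{Lemma2L51f0SPLEq1} with $x(u)=1/(af(u))$, which is \eqref{Lemma2L51f0SPLEq2a}. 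The residual integration constants in the first two spatial slots are killed by the orthogonality $g_0(\widetilde\phi_u,\widetilde\phi_v)=0$ coming from $\widetilde g(e_1,e_2)=0$.

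To obtain the linear constraint \eqref{Lemma2L51f0SPLEq2} I would run the same parametric strategy for the parallel normal $e_4$. Since $e_4\perp TM$ and $\langle e_4,\eta\rangle=0$ from Proposition \ref{PropNonDegeBicCond}, one has $\langle e_4,\partial/\partial t\rangle=0$, so $e_4=(0,\bar e_4)$. The equation $\widetilde\nabla_{e_1}e_4=0$ in \eqref{LemmaL51fcSPLEq4} then forces $\bar e_4=(1/f)N(v)$, and $\widetilde\nabla_{e_2}e_4=-2H_0\,e_2$ reduces to $N'(v)=-2H_0\,\alpha(v)$. Integrating componentwise and imposing $g_0(N,N)=1$ pointwise kills the integration constants in the oscillatory slots, leaving $N=(-(2H_0/a)\sin av,-(2H_0/a)\cos av,c_2,c_3)$ with $c_2^2+c_3^2+4H_0^2/a^2=1$, which is \eqref{Lemma2L51f0SPLEq3}. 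Finally, since $\phi_u$ is tangent to $M$, the orthogonality $\widetilde g(e_4,\phi_u)=0$ expanded in the warped metric yields $c_2\,y'+c_3\,z'=(2H_0/a)\,x'$; integrating and absorbing the constant of integration by a translation in $\mathbb E^4$ produces \eqref{Lemma2L51f0SPLEq2}.

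The main technical hurdle will be the two-way evaluation of $\widetilde\nabla_{e_2}\widetilde\nabla_{e_2}e_2$: unlike in the $L^4_1(f,0)$ setting, one needs the Codazzi-derived input $e_2(\tau)=0$ (not immediate from Lemma \ref{CorryofPropNonDegeBicCond}) in order to close the identity, and the subsequent recognition that the coefficient must be a constant $-a^2$ yields the conservation law $\gamma^2\sinh^2\theta+4H_0^2+\tau^2=a^2$ on which the rest of the explicit parametrization rests.
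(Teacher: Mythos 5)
Your proposal is correct and follows essentially the same route as the paper's proof: the same chart from Lemma \ref{CorryofPropNonDegeBicCond}, the same two-sided evaluation of $\widetilde\nabla_{e_2}\widetilde\nabla_{e_2}e_2$ yielding the constant $\gamma^2\sinh^2\theta+\tau^2+4H_0^2=a^2$, and the same integration of the $e_4$-equations followed by $\langle e_1,e_4\rangle=0$ to get the linear constraint. You merely supply some details the paper leaves implicit (notably $e_2(\tau)=0$ via Codazzi and the elimination of the integration constants), so no substantive difference.
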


\begin{proof}
Let $M$ be a space-like surface in $L^5_1(f,0)$. Assume that it is PMCV and biconservative and consider a local coordinate system $(\mathcal N_p,(u,v))$ described in Lemma \ref{CorryofPropNonDegeBicCond} near to $p$. By the same method used in the proof of Lemma \ref{Lemma2L41f0SPL}, we obtain
\begin{equation}\label{Lemma2L51f0SPLProofEq1}
\phi(u,v)=(u,\widetilde\phi(u,v)),
\end{equation}
for a smooth $\mathbb R^4$ valued function $\tilde\phi$, where $\phi$ is the position vector of $\mathcal N_p$.
Next, we use the second equations of \eqref{LemmaL51fcSPLEq2}-\eqref{LemmaL51fcSPLEq5} to get
\begin{equation} \label{Lemma2L51f0SPLProofEq2}
\widetilde\nabla_{e_2}\widetilde\nabla_{e_2}e_2=\left( \gamma^2-4H_0^2-\tau^2\right)e_2.
\end{equation}
By combining \eqref{LemmaL41f0SPLEq5} and \eqref{Lemma2L51f0SPLProofEq2} we get
$$\alpha''(v)+\left(\gamma(u)^2\sinh^2\theta(u)+\tau^2+4H_0^2\right)\alpha(v)=0.$$
Consequently, we have $\gamma(u)^2\sinh^2\theta(u)+\tau^2+4H_0^2=a^2$ for a constant $a>0$ and $\alpha''+a^2\alpha=0$ from which we have
\begin{equation} \label{Lemma2L51f0SPLProofEq3}
\alpha(v)=(\cos av,-\sin av,0,0).
\end{equation}
By using \eqref{Lemma2L41f0SPLEq3} and \eqref{Lemma2L51f0SPLProofEq3}, we obtain \eqref{Lemma2L51f0SPLEq1} and \eqref{Lemma2L51f0SPLEq2a} for some smooth functions $y,z$. Therefore, we have
\begin{equation} \label{Lemma2L51f0SPLProofEq4}
\phi_u=\left(1,-x'\sin a v,-x'\cos a v,y',z'\right).
\end{equation}

On the other hand, similar to proof of Lemma \ref{Lemma2L41f0SPL}, we observe that \eqref{LemmaL51fcSPLEq4} and \eqref{Lemma2L51f0SPLProofEq3} imply
\begin{align*} 
\begin{split}
\frac{\partial e_4}{\partial u}+\frac{f'}{f}e_4=0,\qquad& \frac{\partial e_4}{\partial v}=\frac {-2H_0}f(\cos av,-\sin av,0,0).
\end{split}
\end{align*}
By integrating these equations and using $\langle e_2,e_4\rangle=0$ we get
\begin{equation}\label{Lemma2L51f0SPLProofEq5}
e_4=\frac 1f\left(0,-\frac{2 H_0 }{a}\sin a v,-\frac{2 H_0 }{a}\cos a v,c_2,c_3\right),
\end{equation}
for some non-zero constants $c_2,c_3$ satisfying \eqref{Lemma2L51f0SPLEq3}. Finally, we consider \eqref{Lemma2L51f0SPLProofEq4} and \eqref{Lemma2L51f0SPLProofEq5} and $\langle e_1,e_4\rangle=0$ to get \eqref{Lemma2L51f0SPLEq2}.
\end{proof}


Now, we are ready to prove the main result of this section.
\begin{theorem}\label{THML51f0SPL}
Let $y,f$ be some functions satisfying the system given by
\begin{align}\label{THML51f0SPLEq1}
\begin{split}
-a^6 b^2 c_3^2 f^7 y' y''-a^4 c_3^2 c_4 f^3 f' f''-2 a^6 c_2 c_3^2 H_0 f^5 \left(f'' y'+f' y''\right)&\\
-2 a^2 c_4 f^2 f'{}^3 \left(a^2 c_3^2-8 c_2 H_0 f' y'\right)-4 a^4 b^2 f^6 f' y'{}^2 \left(a^2 c_3^2-4 c_2 H_0 f' y'\right)&\\
+a^2 f^4 f' \left(-12 a^4 c_2 c_3^2 H_0 f' y'+4 \left(a^4 c_3^2-12 a^2 \left(c_3^2-1\right) H_0^2-48 H_0^4\right) f'{}^2 y'{}^2\right)&\\
+2 a^4 b^4 f^8 f' y'{}^4+a^8c_3^4f^4 f' +2 c_4^2 f'{}^5&=0\\
a^4 c_3^2 f ^3 \left(f'  y'' -f''  y' \right)-a^2 b^4 f ^6 y' {}^3+a^2 b^2 f ^4 y'  \left(a^2 c_3^2-6 c_2 H_0 f'  y' \right)&\\
+f ^2 f'  \left(2 a^4 c_2 c_3^2 H_0+\left(a^4 c_3^2+12 a^2 \left(c_3^2-1\right) H_0^2+48 H_0^4\right) f'  y' \right)-2 c_2 c_4 H_0 f' {}^3&=0
\end{split}
\end{align}
for some non-zero constants $a,c_2,c_3$ satisfying $b^2=a^2-4 H_0^2>0$, where we put $c_4=a^2 c_3^2+4 H_0^2$. Then, the Robertson-Walker spaces $L^5_1(f,0)$ admits a space-like, biconservative PMCV surface $M$ with the mean curvature $H_0$ parametrized by 
\begin{equation}\label{THML51f0SPLEq2}
\phi(u,v)=\left(u,\frac{\sin a v}{a f(u)},\frac{\cos a v}{a f(u)},y(u),\frac{2 H_0-c_2 a^2 f(u)y(u)}{c_3a^2 f(u)}\right).
\end{equation}

Conversely, if a Robertson-Walker space $L^5_1(f,0)$ admits a space-like, biconservative PMCV surface, then $f$ must be a solution of \eqref{THML51f0SPLEq1} and the surface must be locally congruent to the surface given by \eqref{THML51f0SPLEq2}.
\end{theorem}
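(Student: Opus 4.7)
The plan is to build on Lemma \ref{Lemma2L51f0SPL}, which already reduces a biconservative PMCV surface to a parametrization of the form $\phi(u,v)=(u,x(u)\sin av,x(u)\cos av,y(u),z(u))$ with $x=1/(af)$ and subject to the linear constraint \eqref{Lemma2L51f0SPLEq2}. Solving that constraint for $z$ in terms of $y$ and $f$ immediately yields the fifth coordinate displayed in \eqref{THML51f0SPLEq2}, so the only thing still to extract is the pair of scalar conditions on $(f,y)$ that single out those parametrizations which genuinely come from a biconservative PMCV surface. These two conditions will turn out to be the system \eqref{THML51f0SPLEq1}.

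For the necessary direction, I would start from the parametrization of Lemma \ref{Lemma2L51f0SPL} and assemble the adapted frame $\{e_1,e_2;e_3,e_4,e_5\}$ explicitly. From \eqref{CorryofPropNonDegeBicCond3} we have $e_1=-\sinh\theta\,\partial_u$ and $e_2=\partial_v$, with $\sinh^2\theta$ pinned down by the normalization $|e_1|=1$; $e_3$ is fixed by \eqref{RWTS-etaandTDefNew}; $e_4$ is the vector field \eqref{Lemma2L51f0SPLProofEq5} produced in the proof of Lemma \ref{Lemma2L51f0SPL}; and $e_5$ is forced (up to sign) by orthonormality. Using Gauss' formula \eqref{Gauss} together with the ambient connection of Lemma \ref{LemmaLn1f0LCConnect}, one computes $h(e_1,e_1)$ and $h(e_2,e_2)$ and then imposes
\[
\langle H,e_3\rangle=0,\qquad \langle H,e_5\rangle=0.
\]
The first is the biconservation condition \eqref{PropSpacelikeBicProofEq1}, since $\eta=\cosh\theta\,e_3$, while the second encodes the fact that $H=H_0 e_4$ has no $e_5$-component. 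After clearing denominators and using the identity $c_2^2+c_3^2+4H_0^2/a^2=1$ from \eqref{Lemma2L51f0SPLEq3}, these two scalar identities become precisely the two equations of \eqref{THML51f0SPLEq1}. The remaining structural conditions $h(e_1,e_2)=0$ and $\nabla^\perp e_4=0$ hold automatically once the parametrization is in place, as verified by direct differentiation.

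For the converse, given $(f,y)$ satisfying \eqref{THML51f0SPLEq1} one defines $\phi$ by \eqref{THML51f0SPLEq2}, builds the same orthonormal frame, and verifies by direct substitution that the second fundamental form takes the diagonal shape \eqref{SurfacesinL51fcEq1} and that the mean curvature vector equals $H_0 e_4$ with $\nabla^\perp e_4=0$. Proposition \ref{PropNonDegeBicCond} then delivers both biconservation and the PMCV property.

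The main obstacle is algebraic bookkeeping rather than a conceptual hurdle: the equations of \eqref{THML51f0SPLEq1} are quartic polynomial combinations of $f,f',f'',y',y''$ involving the constants $a,c_2,c_3,H_0$ and the abbreviations $b^2=a^2-4H_0^2$ and $c_4=a^2c_3^2+4H_0^2$. To keep the computation manageable I would first express $h(e_1,e_1)$ and $h(e_2,e_2)$ in the ambient basis of $\mathbb{R}^5$ via Lemma \ref{LemmaLn1f0LCConnect}, then project onto $e_3$ and $e_5$ separately; each projection produces one of the ODEs of \eqref{THML51f0SPLEq1} after multiplying through by the appropriate powers of $f$ and $\sqrt{f'{}^2-b^2 f^2}$. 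No new ingredient beyond the structure already developed in Lemmas \ref{LemmaL51fcSPL} and \ref{Lemma2L51f0SPL} is required; the theorem is the explicit scalar form of the two conditions left implicit there.
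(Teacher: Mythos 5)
Your proposal is correct and follows essentially the same route as the paper: starting from the parametrization of Lemma \ref{Lemma2L51f0SPL}, solving the linear constraint \eqref{Lemma2L51f0SPLEq2} for $z$ to get \eqref{THML51f0SPLEq2}, building the explicit frame $\{e_1,e_2;e_3,e_4,e_5\}$, and reducing biconservativity plus PMCV to the vanishing of the $e_3$- and $e_5$-components of $H$ (the paper phrases this as $\tr A_{e_3}=\tr A_{e_5}=0$ via the shape operators, which is the same condition by \eqref{AhRelatedBy}), yielding the system \eqref{THML51f0SPLEq1} after clearing denominators.
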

\begin{proof}
Let $M$ be a surface in given by \eqref{Lemma2L51f0SPLEq1}, \eqref{Lemma2L51f0SPLEq2a} and \eqref{Lemma2L51f0SPLEq2} for some non-zero constant $a,c_2,c_3,H_0$ satisfying \eqref{Lemma2L51f0SPLEq3}. Let $e_1,e_2,e_4$ be the vector fields given by \eqref{CorryofPropNonDegeBicCond3} and \eqref{Lemma2L51f0SPLProofEq5}. We also consider the unit normal vector fields $e_3$ and $e_5$ given by
\begin{equation}\label{Lemma2L51f0SPLAfterProofEq1}
e_3=\frac 1{V f  \sqrt{V^2 f ^2-1}}\frac 1f\left(V^2 f ^2, x'\sin a v , x'\cos a v ,y' ,z' \right),
\end{equation}
and
\begin{align}\label{Lemma2L51f0SPLAfterProofEq3}
\begin{split}
e_5=&\frac 1{V f}\left(0, \left(c_2 z' -c_3 y' \right)\sin  av , \left(c_2 z' -c_3 y' \right)\cos  av ,\frac{2 H_0}{a} z' +c_3 x' ,\right.\\&\left.
-\frac{2 H_0}{a} y' -c_2 x'  \right),
\end{split} 
\end{align}
where we put 
\begin{equation}\label{Lemma2L51f0SPLAfterProofEq2}
V=x'{}^2+y'{}^2+z'{}^2.
\end{equation} 
Then $\{e_1,e_2;e_3,e_4,e_5\}$ is an orthonormal frame field on $M$  defined by \eqref{RWTS-etaandTDefNew} and satisfying \eqref{LemmaL51fcSPLEq4}. Therefore, $M$ is a biconservative PMCV surface if and only if $H=2H_0e_4$. By using \eqref{Lemma2L51f0SPLAfterProofEq1} and \eqref{Lemma2L51f0SPLAfterProofEq3}, we get
\begin{align}\label{Lemma2L51f0SPLAfterProofEq4all}
\begin{split}
A_{e_3}(e_1)=&\frac{V \left(V^2 fc_3^2-2\right) f'c_3-V' fc_3}{\left(V^2 fc_3^2-1\right)^{3/2}}e_1,\\
A_{e_3}(e_2)=&\frac{a x'c_3+V^2 f'c_3}{V \sqrt{V^2 fc_3^2-1}},\\
A_{e_5}(e_1)=&\frac{f \left(a x' \left(c_2 z''-c_3 y''\right)-z' \left(a c_2 x''+2 H_0 y''\right)+y' \left(a c_3 x''+2 H_0 z''\right)\right)}{a V \left(V^2 f^2-1\right)}e_1,\\
A_{e_5}(e_2)=&\frac{a c_2 z'-a c_3 y'}{V}.
\end{split} 
\end{align}
By considering \eqref{Lemma2L51f0SPLAfterProofEq2} and \eqref{Lemma2L51f0SPLAfterProofEq4all}, we observe that the equation  $H=2H_0e_4$ is equivalent to the system
\begin{align}\label{Lemma2L51f0SPLAfterProofEq5}
\begin{split}
x' \left(a f^2 \left(y'{}^2+z'{}^2\right)-a-f x''\right)+f' \left(x'{}^2+y'{}^2+z'{}^2\right) \left(2 f^2 \left(x'{}^2+y'{}^2+z'{}^2\right)-3\right)&\\
+a f^2 x'^3-f \left(y' y''+z' z''\right)&=0\\
a^2 c_2 f^2 y'{}^2 z'a f x' \left(c_2 z''-c_3 y''\right)+z' \left(a c_2 \left(a f^2 x'{}^2-a-f x''\right)-2 H_0 f y''\right)&\\
+a^2 c_2 f^2 z'^3-a^2 c_3 f^2 y'^3++y' \left(a c_3 \left(f \left(x''-a f \left(x'{}^2+z'{}^2\right)\right)+a\right)+2 H_0 f z''\right)&=0.
\end{split} 
\end{align}
Finally, we use \eqref{Lemma2L51f0SPLEq2} in \eqref{Lemma2L51f0SPLAfterProofEq5} to get the system \eqref{THML51f0SPLEq1}.
\end{proof}


\section{PMCV Surfaces in Lorentzian Product Spaces} 
In this section, we are going to consider biconservative PMCV surfaces in the Cartesian product space $L^5_1(1,c)=\mathbb E^1_1\times R^4(c)\subset\mathbb E^6_r$, where $c=\pm1$.

Let $M$ be a space-like surface in $\mathbb E^1_1\times R^4(c)$ and assume that $M$ is PMCV and biconservative.  Consider the orthonormal frame field $\{e_1,e_2;e_3,e_4,e_5\}$ on $M$, where $e_1,e_2,e_3$ are defined by \eqref{RWTS-etaandTDefNew} and $e_4$ is proportional to $H$, i.e., $H=H_0 e_4$.  By using $f=1$, we consider \eqref{SpaceLikeTNablaXpdte2T} and \eqref{CorryofPropNonDegeBicCond1} to get $A_{e_3}e_2=0$. Therefore, \eqref{PropSpacelikeBicCondEq3} implies $A_{e_3}=0$. Therefore, we have 
\begin{equation}\label{LemmaL6RxS4SPreLemma1Eq1}
\gamma_{e_3}=0
\end{equation}
and from \eqref{SpaceLikeTNablaXpdte1T}  we obtain $e_1(\theta)=0$. Consequently, the second equation in \eqref{LemmaL41f0SPLEq5} implies 
\begin{equation}\label{LemmaL6RxS4SPreLemma1Eq2}
\theta=\theta_0
\end{equation}
for a constant $\theta_0\neq0$. Next, we consider \eqref{PropSpacelikeBicCondEq2}, \eqref{PropSpacelikeBicCondEq3}, \eqref{LemmaL6RxS4SPreLemma1Eq1} and \eqref{LemmaL6RxS4SPreLemma1Eq2} with Codazzi equations \eqref{CodazziLn1fc1} and \eqref{CodazziLn1fc2}, to get
\begin{equation}\label{LemmaL6RxS4SPreLemma1Eq3}
\left(\nabla^\perp_{e_2} \gamma_{e_5} e_5\right)=0
\end{equation}
and
\begin{equation}\label{LemmaL6RxS4SPreLemma1Eq4}
\sinh\theta_0\cosh\theta_0 ce_3=\nabla^\perp_{e_1}\left(-\gamma_{e_5} e_5\right),
\end{equation}
respectively. From \eqref{LemmaL6RxS4SPreLemma1Eq3} and \eqref{LemmaL6RxS4SPreLemma1Eq4}  we obtain
\begin{equation}\label{LemmaL6RxS4SPreLemma1Eq5}
\gamma_{e_5}=\tau_0
\end{equation}
for a non-zero constant $\tau_0$. 

\begin{rem}\label{NonExistenceRemark}
Before we proceed, we want to note that if $M$ is a biconservative PMCV surface in a totally geodesic hypersurface $\mathbb E^1_1\times R^3(c)$ of $\mathbb E^1_1\times R^4(c)$, the equation \eqref{LemmaL6RxS4SPreLemma1Eq4} turns into
$$\sinh\theta_0\cosh\theta_0 ce_3=0.$$
Hence, it turns out that there are no space-like biconservative PMCV surface in the 4-dimensional Cartesian product space $\mathbb E^1_1\times R^3(c)$ except the trivial case $T=0$ or, equivalently, $\theta_0=0$.
\end{rem}

Next we use \eqref{LCConnectionsRelatedAll}, \eqref{LemmaL6RxS4SPreLemma1Eq1}, \eqref{LemmaL6RxS4SPreLemma1Eq2} \eqref{LemmaL6RxS4SPreLemma1Eq5} in Lemma \ref{LemmaL51fcSPLALLEqs} for the case $f=1$ and $c=\pm1$ to get following result.
\begin{lemma}\label{LemmaRxRcSPLemma1}
Let $M$  be a space-like surface in $\mathbb E^1_1\times R^4(c)$ with mean curvature $H_0$ and $\{e_1,e_2;e_3,e_4,e_5\}$ be the orthonormal frame field defined by \eqref{RWTS-etaandTDefNew} and  $H=H_0 e_4$. If $M$ is  PMCV and biconservative, then the Levi-Civita connection $\hat\nabla$ of the flat ambient space $\mathbb E^6_r$ satisfies
\begin{subequations}\label{LemmaRxRcSPLqALL}
\begin{eqnarray}
\label{LemmaRxRcSPLq1}\hat\nabla_{e_1}e_1=\tau_0 e_5-c\cosh ^2\theta _0e_6 ,&\qquad& \hat\nabla_{e_2}e_1=0,\\
\label{LemmaRxRcSPLq2}\hat\nabla_{e_1}e_2=0 ,&\qquad& \hat\nabla_{e_2}e_2=2H_0e_4-\tau_0 e_5-ce_6,\\
\label{LemmaRxRcSPLq3}\hat\nabla_{e_1}e_3=-\tanh\theta_0 \tau_0 e_5+c\frac{\sinh2\theta_0}2 e_6,&\qquad& \hat\nabla_{e_2}e_3=0,\\
\label{LemmaRxRcSPLq4}\hat\nabla_{e_1}e_4=0 ,&\qquad& \hat\nabla_{e_2}e_4=-2H_0e_2,\\
\label{LemmaRxRcSPLq5}\hat\nabla_{e_1}e_5=-\tau_0 e_1-\tanh\theta_0 \tau_0 e_3 ,&\qquad& \hat\nabla_{e_2}e_5=\tau_0 e_2,\\
\label{LemmaRxRcSPLq6}\hat\nabla_{e_1}e_6=\cosh^2\theta_0 e_1+\frac{\sinh2\theta_0}2 e_3 ,&\qquad& \hat\nabla_{e_2}e_6=e_2
\end{eqnarray}
for some constants $\theta_0,\tau_0$, where $e_6$ denotes the unit normal vector field of  $\mathbb E^1_1\times R^4(c)$ in $\mathbb E^6_r$ and $H_0$ denotes the mean curvature of $M$ in $\mathbb E^1_1\times R^4(c)$. 
\end{subequations}
\end{lemma}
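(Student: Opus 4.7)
The plan is to derive \eqref{LemmaRxRcSPLqALL} in two passes: first specialize Lemma \ref{LemmaL51fcSPL} to the present setting $f\equiv 1$, and then convert the product-space connection $\widetilde\nabla$ to the ambient flat connection $\hat\nabla$ via the transition identities \eqref{LCConnectionsRelatedAll}.

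In the first pass, the preparatory steps \eqref{LemmaL6RxS4SPreLemma1Eq1}, \eqref{LemmaL6RxS4SPreLemma1Eq2} and \eqref{LemmaL6RxS4SPreLemma1Eq5} have already supplied $\gamma_{e_3}=0$, $\theta\equiv\theta_0$ and $\gamma_{e_5}=\tau_0$ constant; substituting these, together with $f\equiv 1$, into Lemma \ref{LemmaL51fcSPL} immediately produces all ten covariant derivatives $\widetilde\nabla_{e_i}e_j$ with $i\in\{1,2\}$ and $j\in\{1,\dots,5\}$. These coincide termwise with the non-$e_6$ pieces on the right-hand sides of \eqref{LemmaRxRcSPLq1}--\eqref{LemmaRxRcSPLq5}, so no further work is needed at the $\widetilde\nabla$-level.

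In the second pass, I read off from \eqref{RWTS-etaandTDefNew} that $T=\sinh\theta_0\,e_1$ and $\eta=\cosh\theta_0\,e_3$, and record the inner products $\langle e_1,T\rangle=\sinh\theta_0$, $\langle e_2,T\rangle=0$, $\langle e_3,\eta\rangle=-\cosh\theta_0$, $\langle e_4,\eta\rangle=\langle e_5,\eta\rangle=0$, noting that the minus sign in $\langle e_3,\eta\rangle$ comes from the fact that \eqref{RWTS-etaandTDefNew} together with $\langle\partial/\partial t,\partial/\partial t\rangle=-1$ forces $e_3$ to be time-like. Feeding these data into \eqref{LCConnectionsRelated1} contributes exactly the corrections $-c\cosh^2\theta_0\,e_6$ to $\hat\nabla_{e_1}e_1$ and $-c\,e_6$ to $\hat\nabla_{e_2}e_2$, producing \eqref{LemmaRxRcSPLq1}--\eqref{LemmaRxRcSPLq2}. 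Feeding them into \eqref{LCConnectionsRelated2} leaves only one non-vanishing correction, namely $\tfrac{c}{2}\sinh 2\theta_0\,e_6$ in $\hat\nabla_{e_1}e_3$, the other normal-direction corrections being killed because either $\langle X,T\rangle=0$ (for $X=e_2$) or $\langle\xi,\eta\rangle=0$ (for $\xi=e_4,e_5$); this delivers \eqref{LemmaRxRcSPLq3}--\eqref{LemmaRxRcSPLq5}. Finally, \eqref{LemmaRxRcSPLq6} is a direct instance of \eqref{LCConnectionsRelated3} evaluated at $X=e_1,e_2$, using $T+\eta=\sinh\theta_0\,e_1+\cosh\theta_0\,e_3$ to recover the $\tfrac{1}{2}\sinh 2\theta_0\,e_3$ piece.

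No real obstacle is anticipated, as the whole computation is purely a mechanical substitution. The only point demanding care is sign bookkeeping: the time-like character of $e_3$ flips the sign of $\langle e_3,\eta\rangle$, and that sign combines with the explicit $-c$ factor in \eqref{LCConnectionsRelated2} to produce the $+c$ in front of the $e_6$ term in \eqref{LemmaRxRcSPLq3}. I will cross-check this by computing $\hat\nabla_{e_1}e_1$ independently from \eqref{LCConnectionsRelated1}, where the sign of the $c\cosh^2\theta_0\,e_6$ correction is unambiguous and forces internal consistency of the entire sign convention.
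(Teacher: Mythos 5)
Your proposal is correct and follows essentially the same route as the paper, which likewise obtains the lemma by substituting $f\equiv 1$, $\gamma_{e_3}=0$, $\theta\equiv\theta_0$ and $\gamma_{e_5}=\tau_0$ into Lemma \ref{LemmaL51fcSPL} and then applying the transition formulas \eqref{LCConnectionsRelatedAll}. Note only that you have tacitly read the factor $\langle X,\eta\rangle$ in \eqref{LCConnectionsRelated2} as $\langle X,T\rangle$ --- which is indeed the correct formula (as literally written, $\langle X,\eta\rangle=0$ for tangent $X$) and is exactly what produces the $c\,\tfrac{\sinh 2\theta_0}{2}\,e_6$ term in \eqref{LemmaRxRcSPLq3}.
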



Next, we get the following non-existence result.
\begin{proposition}\label{NonExistenceE11H4}
There are no space-like biconservative PMCV surface in  $\mathbb E^1_1\times \mathbb H^4$.
\end{proposition}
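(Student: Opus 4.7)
The plan is to invoke Lemma \ref{LemmaRxRcSPLemma1} with $c=-1$ and derive a contradiction from the flatness of the ambient pseudo-Euclidean space. Since $\mathbb{H}^4$ sits inside $\mathbb{E}^5_1$ as a hypersurface with a time-like unit normal, the product $\mathbb{E}^1_1 \times \mathbb{H}^4$ is a hypersurface of $\mathbb{E}^6_2$ whose unit normal $e_6$ is time-like. This sign is the essential distinction from the $c=1$ case and will ultimately produce the obstruction.

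Suppose, for contradiction, that a space-like biconservative PMCV surface $M$ in $\mathbb{E}^1_1 \times \mathbb{H}^4$ exists. From Lemma \ref{LemmaRxRcSPLemma1} one reads off $\hat\nabla_{e_1} e_2 = \hat\nabla_{e_2} e_1 = 0$, so $[e_1,e_2]=0$. The flatness of $\mathbb{E}^6_2$ then forces
$$\hat\nabla_{e_1}\hat\nabla_{e_2} X = \hat\nabla_{e_2}\hat\nabla_{e_1} X$$
for every vector field $X$ along $M$. I would apply this identity with $X = e_1$: the left-hand side vanishes since $\hat\nabla_{e_2} e_1 = 0$, whereas the right-hand side, obtained by differentiating $\hat\nabla_{e_1}e_1 = \tau_0 e_5 + \cosh^2\theta_0\, e_6$ (equation \eqref{LemmaRxRcSPLq1} with $c=-1$) and invoking \eqref{LemmaRxRcSPLq5} and \eqref{LemmaRxRcSPLq6}, collapses to the scalar multiple $(\tau_0^2 + \cosh^2\theta_0)\,e_2$ of $e_2$. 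Thus $\tau_0^2 + \cosh^2\theta_0 = 0$, which is impossible because $\cosh^2\theta_0 \geq 1 > 0$.

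I do not expect any genuine obstacle, since once Lemma \ref{LemmaRxRcSPLemma1} is in hand the contradiction emerges from a single commutator computation. The only subtlety is tracking the sign of $c$: the same compatibility identity yields $\tau_0^2 = c\cosh^2\theta_0$, which is consistent when $c=1$ (and governs the classification in Theorem \ref{THMRxRcSP1}), but becomes impossible when $c=-1$ because the two contributions coming from $-c\cosh^2\theta_0\, e_6$ in \eqref{LemmaRxRcSPLq1} and from $\hat\nabla_{e_2}e_6 = e_2$ in \eqref{LemmaRxRcSPLq6} then add rather than cancel. A parallel computation with $X = e_2$ instead of $X = e_1$ gives the same contradiction and may be used as a cross-check; beyond careful sign-keeping, nothing further is required.
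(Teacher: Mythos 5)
Your proof is correct, and it reaches the paper's conclusion by a slightly different mechanism. Both arguments rest on Lemma \ref{LemmaRxRcSPLemma1} and both reduce the problem to the scalar relation $\tau_0^2=c\cosh^2\theta_0$, which is impossible for $c=-1$ since $\cosh^2\theta_0\geq 1$. The paper extracts this relation from the Codazzi equation \eqref{CodazziLn1fc2} of $M$ in $\mathbb E^1_1\times\mathbb H^4$: with $f=1$ and $c=-1$ the curvature term becomes $-\sinh\theta_0\cosh\theta_0\,e_3$, and comparing with $\nabla^\perp_{e_1}e_5=-\tanh\theta_0\tau_0\,e_3$ gives $\sinh\theta_0\cosh\theta_0+\tau_0^2\tanh\theta_0=0$. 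You instead exploit the flatness of the ambient $\mathbb E^6_2$, commuting $\hat\nabla_{e_1}$ and $\hat\nabla_{e_2}$ on $e_1$ (legitimate because \eqref{LemmaRxRcSPLq1} and \eqref{LemmaRxRcSPLq2} give $[e_1,e_2]=0$), and your computation $(\tau_0^2-c\cosh^2\theta_0)\,e_2=0$ is exact. The two routes are equivalent in content --- the Codazzi equation of $M$ in the product is a component of the flat-ambient integrability condition --- but yours is marginally more self-contained, needing only the frame equations of Lemma \ref{LemmaRxRcSPLemma1} together with $\hat R=0$ and no separate appeal to \eqref{CodazziLn1fc2}. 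Your closing remark that the same identity for $c=1$ yields the consistent constraint $\tau_0^2=\cosh^2\theta_0$, and your cross-check with $X=e_2$, are likewise correct.
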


\begin{proof}
Assume that $M$ is a space-like biconservative PMCV surface in  $\mathbb E^1_1\times \mathbb H^4$, i.e., $f=1$ and $c=-1$. Then,  by combining \eqref{LemmaRxRcSPLq2}, \eqref{LemmaRxRcSPLq4} with the Codazzi equation \eqref{CodazziLn1fc2}, we obtain 
\begin{equation}\label{NonExistenceE11H4E1}
-\sinh\theta_0\cosh\theta_0 e_3=-\tau_0\nabla^\perp_{e_1}e_5.
\end{equation}
However, the second equation in \eqref{LemmaRxRcSPLq5} and \eqref{NonExistenceE11H4E1} imply
$$
\sinh\theta_0\cosh\theta_0+\tau_0^2\tanh\theta_0=0
$$
which is a contradiction.
\end{proof}

\begin{rem}
The same result obtained for surfaces of the Riemannian product $\mathbb E^1\times \mathbb H^4$ in \cite[Lemma 3.1]{FetcuOniciucPinheiro2015}. 
\end{rem}


Now, we are ready to proved the main result of this section.
\begin{theorem}\label{THMRxRcSP1}
Let $M$ be a space-like surface in  $\mathbb E^1_1\times \mathbb S^4$. Then $M$ is biconservative and PMCV if and only if it is congruent to the surface locally parametrized by
\begin{align}\label{THMRxRcSP1Eq1}
\begin{split}
\phi(u,v)=&\left(-b_1u,\frac{\sqrt{b_1^2+1} \cos \left(\sqrt{b_1^2+2} u\right)}{\sqrt{b_1^2+2}},\frac{\sqrt{b_1^2+1} \sin \left(\sqrt{b_1^2+2} u\right)}{\sqrt{b_1^2+2}},b_2,\right.\\
&\left.b_3 \sin\frac{v}{b_3},b_3 \cos\frac{v}{b_3}\right)
\end{split}
\end{align}
for some non-zero constants $b_1,b_2,b_3$ satisfying $b_2^2+b_3^2=\frac{1}{b_1^2+2}$.
\end{theorem}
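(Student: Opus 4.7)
The plan is to exploit the fact that Lemma \ref{LemmaRxRcSPLemma1} expresses every ambient covariant derivative $\hat\nabla_{e_i}e_j$ ($i=1,2$) as a linear combination of $\{e_1,\ldots,e_6\}$ with \emph{constant} coefficients, since $\theta_0$, $\tau_0$ and $H_0$ are all constant. In the chart $(u,v)$ of Lemma \ref{CorryofPropNonDegeBicCond} we have $\partial_u=-(\sinh\theta_0)^{-1}e_1$ and $\partial_v=e_2$, so the moving frame obeys linear ODEs with constant coefficients separately in $u$ and in $v$. Integrating these two decoupled systems and reading off $\phi$ via $\phi_u,\phi_v$ will produce the explicit parametrisation \eqref{THMRxRcSP1Eq1}.

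For the evolution in $v$, equations \eqref{LemmaRxRcSPLq1}--\eqref{LemmaRxRcSPLq6} give $\hat\nabla_{e_2}e_1=\hat\nabla_{e_2}e_3=0$, so $e_1$ and $e_3$ are constant along $v$-curves, while $(e_2,e_4,e_5,e_6)$ form a closed subsystem. Differentiating $\hat\nabla_{e_2}e_2=2H_0\,e_4-\tau_0\,e_5-e_6$ once more (with $c=1$) yields $\hat\nabla_{e_2}^{2}e_2=-(4H_0^{2}+\tau_0^{2}+1)\,e_2$, so $e_2$ is a harmonic oscillator along each $v$-line with frequency $\omega_v=\sqrt{4H_0^{2}+\tau_0^{2}+1}$. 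Integrating $\phi_v=e_2$, the $v$-dependence of $\phi$ is a circle of radius $1/\omega_v$ in a fixed $2$-plane, plus an integration ``constant'' depending on $u$. For the evolution in $u$, $\hat\nabla_{e_1}e_2=\hat\nabla_{e_1}e_4=0$ show that $e_2,e_4$ are constant along $u$-lines, while $(e_1,e_3,e_5,e_6)$ is a second closed subsystem. The crucial shortcut here is that $\frac{\partial}{\partial t}=\sinh\theta_0\,e_1+\cosh\theta_0\,e_3$ is a constant vector of the ambient $\mathbb E^6_r$, so it is a zero mode of $\hat\nabla_{e_1}^{\,2}$ on $\mathrm{span}\{e_1,e_3\}$; the complementary mode $\cosh\theta_0\,e_1+\sinh\theta_0\,e_3$ satisfies $\hat\nabla_{\partial_u}^{\,2}(\cdot)=-\omega_u^{2}(\cdot)$ for an $\omega_u$ explicitly computable from $\theta_0,\tau_0$.

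Assembling the two integrations and applying a rigid motion of $\mathbb E^6_r$ that preserves $\mathbb E^1_1\times\mathbb S^4$ and aligns the invariant $2$-planes and the axis of $\partial/\partial t$ with coordinate subspaces, the position vector takes the form \eqref{THMRxRcSP1Eq1}: a linear term $-b_1u$ along time, a circle of radius $\sqrt{b_1^{2}+1}/\sqrt{b_1^{2}+2}$ with frequency $\sqrt{b_1^{2}+2}=\omega_u$ in the $u$ variable, a constant term $b_2$, and a circle of radius $b_3=1/\omega_v$ in the $v$ variable. The constraint $b_2^{2}+b_3^{2}=1/(b_1^{2}+2)$ is exactly the equation $\|\phi_{\text{spatial}}\|=1$ of $\mathbb S^4$ and is therefore automatic. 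The main obstacle is the $u$-integration: the $2\times 2$ block that $\hat\nabla_{e_1}^{\,2}$ induces on $\mathrm{span}\{e_1,e_3\}$ is singular and non-diagonal, and recognising $\partial/\partial t$ as its kernel is what collapses the analysis to a single harmonic oscillator.

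The converse direction is a direct verification. Given \eqref{THMRxRcSP1Eq1}, one checks that $\phi_u,\phi_v$ are orthogonal space-like unit vectors, constructs the orthonormal frame $\{e_1,\ldots,e_6\}$ as in Lemma \ref{LemmaRxRcSPLemma1}, reads off the second fundamental form according to \eqref{SurfacesinL51fcEq1}--\eqref{SurfacesinL51fcEq2} with $\theta=\theta_0$ and $\tau=\tau_0$ constant and $H=H_0\,e_4$, and verifies $\nabla^\perp H=0$ and $\langle H,\eta\rangle=0$; biconservativity then follows from Proposition \ref{PropNonDegeBicCond}.
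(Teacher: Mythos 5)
Your $v$-direction analysis coincides with the paper's: both reduce $\bar e_2$ to a harmonic oscillator of frequency $a=\sqrt{4H_0^2+\tau_0^2+1}$ and integrate $\phi_v=e_2$ to a circle of radius $b_3=1/a$. Your treatment of the $u$-direction is a genuinely different, and in principle cleaner, route: the observation that $\partial/\partial t=\sinh\theta_0\,e_1+\cosh\theta_0\,e_3$ is a zero mode of the closed subsystem on $\mathrm{span}\{e_1,e_3,e_5,e_6\}$, so that $\phi_u$ splits into a constant time component plus $\cosh\theta_0$ times a vector satisfying $\hat\nabla_{e_1}^2(\cdot)=-\omega_u^2(\cdot)$, is correct (one computes $\omega_u^2=\tau_0^2/\cosh^2\theta_0+\cosh^2\theta_0$). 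The paper instead avoids integrating the $u$-system: it deduces $\phi_u=(-\sinh\theta_0,\phi_1',\phi_2',\phi_3',0,0)$ from the orthogonality relations $\langle e_1,e_2\rangle=\langle e_1,e_4\rangle=\langle e_1,e_6\rangle=0$, obtains $\phi_3=b_2$ constant, gets a circle of as yet undetermined radius $b_0$ from $\phi_1^2+\phi_2^2=b_0^2$ and $\phi_1'^2+\phi_2'^2=\cosh^2\theta_0$, and only at the very end computes $H$ of the candidate surface and imposes $\nabla^\perp H=0$ (the condition $b_4=0$ in \eqref{THMRxRcSP1PrEq17}) to fix $b_0=\cosh\theta_0/\sqrt{\cosh^2\theta_0+1}$.

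There is, however, one genuine gap in your version: you assert that $\omega_u=\sqrt{b_1^2+2}$ and that the $u$-circle has radius $\sqrt{b_1^2+1}/\sqrt{b_1^2+2}$, which with $b_1=\sinh\theta_0$ amounts to the relation $\tau_0^2=\cosh^2\theta_0$. Nothing in your argument establishes this; for general $\tau_0$ your integration yields frequency $\sqrt{\tau_0^2/\cosh^2\theta_0+\cosh^2\theta_0}$ and radius $\cosh^2\theta_0/\sqrt{\tau_0^2+\cosh^4\theta_0}$, which is not of the form \eqref{THMRxRcSP1Eq1}. The missing relation is the $c=+1$ analogue of the computation in Proposition \ref{NonExistenceE11H4}: the Codazzi equation \eqref{CodazziLn1fc2} together with the first equation of \eqref{LemmaRxRcSPLq5} gives $\sinh\theta_0\cosh\theta_0\,c=\tau_0^2\tanh\theta_0$, hence $\tau_0^2=\cosh^2\theta_0$ when $c=1$; equivalently it is the integrability condition $\hat\nabla_{e_1}\hat\nabla_{e_2}e_3=\hat\nabla_{e_2}\hat\nabla_{e_1}e_3$ of the system \eqref{LemmaRxRcSPLqALL}, without which the linear system you propose to integrate is not even compatible. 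Once this is inserted, $\omega_u^2=1+\cosh^2\theta_0=b_1^2+2$, the radius becomes $\cosh\theta_0/\omega_u=\sqrt{b_1^2+1}/\sqrt{b_1^2+2}$, and the rest of your argument, including the constraint $b_2^2+b_3^2=1/(b_1^2+2)$ coming from $\phi\in\mathbb S^4$ and the converse by direct verification, goes through and reproduces the paper's conclusion.
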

\begin{proof}
In order to prove the sufficient condition, assume that $M$ is biconservative PMCV and  let $p\in M$. Then, by Lemma \ref{LemmaRxRcSPLemma1} we have \eqref{LemmaRxRcSPLqALL}.  \eqref{LemmaRxRcSPLq1} and \eqref{LemmaRxRcSPLq2} implies $[e_1,e_2]=0$. So, there exists a local coordinate system $(\mathcal N_p,(u,v))$ such that $\mathcal N_p\ni p$ and 
\begin{equation}\label{THMRxRcSP1PrEq1}
e_1=\partial_u, \qquad e_2=\partial_v.
\end{equation}
This equation, the second equation in \eqref{LemmaRxRcSPLq1} and the first equation in \eqref{LemmaRxRcSPLq2} imply
\begin{equation}\label{THMRxRcSP1PrEq2}
e_1=\partial_u=(-\sinh\theta_0,\bar e_1)
\end{equation}
and
\begin{equation}\label{THMRxRcSP1PrEq3}
e_2=\partial_v=(0,\bar e_2)
\end{equation}
for some smooth $\mathbb R^5$-valued functions $\bar e_1=\bar e_1(u)$ and $\bar e_2=\bar e_2(v)$. 

On the other hand, by using the second equations in \eqref{LemmaRxRcSPLq2}, \eqref{LemmaRxRcSPLq4}-\eqref{LemmaRxRcSPLq6}, we get
$$
\hat\nabla_{e_2}\hat\nabla_{e_2}e_2=(0,\bar e_2'')=(4H_0^2+\tau_0^2+1)(0,\bar e_2).
$$
Since $\langle \bar e_2,\bar e_2\rangle=1$, up to a suitable rotation, we may assume
\begin{equation}\label{THMRxRcSP1PrEq5}
\bar e_2= \cos av C_1+\sin av C_2
\end{equation}
for some constant vectors $C_1,C_2\in\mathbb R^5$, where we put $a=\sqrt{4H_0^2+\tau_0^2+1}$. Since $\bar e_2$ is unit, $\{C_1,C_2\}$ is orthonormal.

Note that we have
\begin{equation}\label{THMRxRcSP1PrEq5e4}
e_4=(0,\bar e_4)
\end{equation}
for a $\mathbb R^5$-valued function $\bar e_4$. We use \eqref{LemmaRxRcSPLq4} and \eqref{THMRxRcSP1PrEq5e4} to obtain
\begin{align*} 
\begin{split}
\frac{\partial \bar e_4}{\partial u}=0,\qquad& \frac{\partial \bar e_4}{\partial v}=-2H_0(\cos av C_1+\sin av C_2)
\end{split}
\end{align*}
which implies
\begin{equation}\label{THMRxRcSP1PrEq6}
\bar e_4=\frac{-2H_0}{a}\sin av C_1+\frac{-2H_0}{a}\cos av C_2+C_3
\end{equation}
for a constant vector $C_3\in\mathbb R^5$. By considering $\langle \bar e_2,\bar e_4\rangle=0$ and $\langle \bar e_4,\bar e_4\rangle=1$, we obtain $\langle C_1,C_3\rangle=\langle C_2,C_3\rangle=0$ and $\langle C_3,C_3\rangle=\frac{\tau_0^2+1}{a^2}$. Up to a suitable rotation in $\mathbb R^5$, we may choose $C_1=(0,0,0,1,0),\ C_2=(0,0,0,0,-1),\ C_3=(0,0,\frac{\sqrt{\tau_0^2+1}}{a},0,0)$. Therefore, \eqref{THMRxRcSP1PrEq3}-\eqref{THMRxRcSP1PrEq6} imply
\begin{eqnarray}
\label{THMRxRcSP1PrEq7} e_2=\phi_v&=&(0,0,0,0,\cos av, -\sin av),\\
\label{THMRxRcSP1PrEq8} e_4&=&(0,0,0,\frac{\sqrt{\tau_0^2+1}}{a},\frac{-2H_0}{a}\sin av,\frac{2H_0}{a} \cos av).
\end{eqnarray}
Moreover, by a direct computation using \eqref{THMRxRcSP1PrEq2}, \eqref{THMRxRcSP1PrEq7} and $\langle e_1,e_2\rangle=\langle e_1,e_6\rangle=0$, we get
\begin{eqnarray}
\label{THMRxRcSP1PrEq9} e_1=\phi_u&=&(-\sinh\theta_0,\phi_1'(u),\phi_2'(u),\phi_3'(u),0, 0)
\end{eqnarray}
for some smooth functions $\phi_1,\phi_2,\phi_3$ satisfying
\begin{eqnarray}
\label{THMRxRcSP1PrEq10} \phi_1'{}^2+\phi_2'{}^2+\phi_3'{}^2=\cosh^2\theta_0.
\end{eqnarray}
Note that $\langle e_1,e_4\rangle=0$ implies
\begin{eqnarray}
\label{THMRxRcSP1PrEq11} \phi_3=b_2
\end{eqnarray}
for a constant $b_2$.

Next, by using \eqref{THMRxRcSP1PrEq7} and \eqref{THMRxRcSP1PrEq9} and considering \eqref{THMRxRcSP1PrEq11} we obtain
\begin{eqnarray}
\label{THMRxRcSP1PrEq12} \phi(u,v)&=&(-u\sinh\theta_0,\phi_1(u),\phi_2(u),b_2,b_3 \sin\frac{v}{b_3},b_3 \cos\frac{v}{b_3})
\end{eqnarray}
for a non-zero constant constant $b_2$, where we put $a=\frac 1{b_3}$.  \eqref{THMRxRcSP1PrEq10} turns into
\begin{eqnarray}
\label{THMRxRcSP1PrEq13} \phi_1'{}^2+\phi_2'{}^2=\cosh^2\theta_0
\end{eqnarray}
and from $\langle e_6,e_6\rangle=1$ we have
\begin{eqnarray}
\label{THMRxRcSP1PrEq14} \phi_1^2+\phi_2^2=b_0^2,
\end{eqnarray}
where we put $b_0=\sqrt{1-(b_2^2+b_3^2)}$. Because of \eqref{THMRxRcSP1PrEq13} and \eqref{THMRxRcSP1PrEq14}, up to a suitable rotation, we may assume
\begin{eqnarray}\label{THMRxRcSP1PrEq15}
\phi_1=b_0 \cos \left(\frac{u \cosh \theta _0 }{b_0}\right),&\qquad& \phi_2=b_0 \sin \left(\frac{u \cosh \theta _0 }{b_0}\right).
\end{eqnarray}
By combining \eqref{THMRxRcSP1PrEq12} and \eqref{THMRxRcSP1PrEq15}, we obtain

\begin{align}\label{THMRxRcSP1PrEq16}
\begin{split}
 \phi(u,v)=&\left(-u\sinh\theta_0,b_0 \cos \left(\frac{u \cosh \theta _0 }{b_0}\right),b_0 \sin \left(\frac{u \cosh \theta _0 }{b_0}\right),b_2,\right.\\
&\left.b_3 \sin\frac{v}{b_3},b_3 \cos\frac{v}{b_3}\right).
\end{split}
\end{align}
By a direct computation, we obtain the mean curvature vector $H$ of $M$ in $\mathbb E^1_1\times \mathbb S^4$ as
\begin{align}\label{THMRxRcSP1PrEq17}
\begin{split}
H=&\left(0,b_4 \cos \left(\frac{u \cosh \left(\theta _0\right)}{b_0}\right),b_4 \sin \left(\frac{u \cosh \left(\theta _0\right)}{b_0}\right),\frac{1}{2} b_2 \left(\cosh ^2\left(\theta _0\right)+1\right),\right.\\&\left.b_5 \sin \left(\frac{v}{b_3}\right),b_5 \cos \left(\frac{v}{b_3}\right)\right),
\end{split}
\end{align}
where we put $b_4=\frac{b_0^2 \left(\cosh ^2\left(\theta _0\right)+1\right)-\cosh ^2\left(\theta _0\right)}{2  b_0}$ and $ b_5 =\frac{b_3^2 \left(\cosh ^2\left(\theta _0\right)+1\right)-1}{2 b_3}$. Note that we have $\langle H,\eta\rangle=0$. By a further computation using \eqref{THMRxRcSP1PrEq1} and \eqref{THMRxRcSP1PrEq17}, we see that $H$ is parallel if and only if $b_4=0$ which implies
\begin{eqnarray}\label{THMRxRcSP1PrEq18}
b_0= \frac{\cosh \theta _0}{\sqrt{\cosh ^2\theta _0+1}}.
\end{eqnarray}
Finally, we combine \eqref{THMRxRcSP1PrEq16} with \eqref{THMRxRcSP1PrEq18} and put $b_1=\sinh\theta_0$ to get \eqref{THMRxRcSP1Eq1}. 

The proof of the converse follows from a direct computation.
\end{proof}


\section{Reduction of Co-dimension}
In this section, we consider the substantial co-dimension of a biconservative surfaces in Robertson-Walker spaces with higher dimension.

Let $M$ be an oriented space-like biconservative PMCV surface in $L^n_1(f,c), n\geq 6$. Consider the  positively oriented global vector fields $e_1,e_2,e_3,e_4$ and the function $\theta:M\to\mathbb R$ defined by \eqref{RWTS-etaandTDefNew} and $H=H_0 e_4$. Also, by considering Proposition \ref{PropNonDegeBicCond}, define the vector field $\xi$ on $M$ and a function $\gamma:M\to\mathbb R$ by
\begin{align}\label{CoDimEq1}
\begin{split}
 \langle\xi,\eta\rangle =0, \qquad& h(e_1,e_2)=0,\\
h(e_1,e_1)=-\gamma e_3+\xi, \qquad& h(e_2,e_2)=\gamma e_3+2H_0 e_4-\xi,
\end{split}
\end{align}
where we put $\gamma=\gamma_{e_3}=h^3_{11}.$

We are going to use the following lemma.
\begin{lemma}\label{CoDimLemma1}
Let $M$ be an oriented space-like biconservative PMCV surface in $L^n_1(f,c), n\geq 6$. Then, we have two cases:
\begin{enumerate}
\item [Case 1.] $\dim N_1M=2$ at every point of $M$, $\eta\in N_1M$ and $\nabla^\perp (N_1M)\subset N_1M$,
\item [Case 2.] $\dim N_2M=3$ at every point of $M$, $\eta\in N_2M-N_1M$  and $\nabla^\perp (N_2M)\subset N_2M$.
\end{enumerate}
\end{lemma}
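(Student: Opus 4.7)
The plan is to combine the pointwise structure of the second fundamental form given by Proposition~\ref{PropNonDegeBicCond} with the Codazzi equations \eqref{CodazziLn1fc1}--\eqref{CodazziLn1fc2}, the vanishing tangent Christoffels from Lemma~\ref{CorryofPropNonDegeBicCond}, and---crucially---the normal structure equations \eqref{SpaceLikeTNablaXpdte1N}--\eqref{SpaceLikeTNablaXpdte2N}. Write $H=H_0 e_4$, $h(e_1,e_1)=-\gamma e_3+\xi$, $h(e_2,e_2)=\gamma e_3+2H_0 e_4-\xi$ and $h(e_1,e_2)=0$ with $\xi\perp e_3,e_4$, as prescribed by Proposition~\ref{PropNonDegeBicCond}.

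First I would use $\nabla_{e_i}e_j=0$ and $h(e_1,e_2)=0$ to reduce the two Codazzi equations to
\begin{equation*}
\nabla^\perp_{e_2}h(e_1,e_1)=0,\qquad \nabla^\perp_{e_1}h(e_2,e_2)=\lambda e_3,
\end{equation*}
where $\lambda:=\sinh\theta\cosh\theta\bigl(-\tfrac{f''}{f}+\tfrac{f'{}^2+c}{f^2}\bigr)\neq 0$ by the standing assumptions. The PMCV identity $\nabla^\perp_X h(e_1,e_1)+\nabla^\perp_X h(e_2,e_2)=2\nabla^\perp_X H=0$ then forces $\nabla^\perp_{e_1}h(e_1,e_1)=-\lambda e_3$ and $\nabla^\perp_{e_2}h(e_2,e_2)=0$. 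Combined with $h(e_1,e_2)=0$, every $\nabla^\perp_Z h(X,Y)$ lies in $N_1 M+\mathbb{R} e_3$, so $N_2 M=N_1 M+\mathbb{R} e_3$.

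Next I dichotomize on whether $\xi$ vanishes. If $\xi\equiv 0$, then $N_1 M=\mathrm{span\,}\{e_3,e_4\}$; the subcase $\gamma\equiv 0$ is excluded because it would force $\nabla^\perp_{e_1}h(e_2,e_2)=2H_0\nabla^\perp_{e_1}e_4=0$, contradicting $\lambda\neq 0$, so $\dim N_1 M=2$ and $\eta=\cosh\theta\,e_3\in N_1 M$, placing us in Case~1. If $\xi\neq 0$, set $e_5=\xi/|\xi|$ and $\tau=|\xi|>0$; then $N_1 M=\mathrm{span\,}\{-\gamma e_3+\tau e_5,\,e_4\}$ has dimension $2$, $e_3\notin N_1 M$, and $N_2 M=\mathrm{span\,}\{e_3,e_4,e_5\}$ has dimension $3$ with $\eta\in N_2 M\setminus N_1 M$, yielding Case~2.

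Finally, to obtain the parallelism, I substitute $h(e_1,e_1)=-\gamma e_3+\tau e_5$ and $h(e_1,e_2)=0$ into \eqref{SpaceLikeTNablaXpdte1N}--\eqref{SpaceLikeTNablaXpdte2N}. The $e_3$-coefficient on the left must vanish (because $\nabla^\perp_{e_i}e_3\perp e_3$), and what remains reads
\begin{equation*}
\nabla^\perp_{e_1}e_3=-\tanh\theta\,\tau\,e_5,\qquad \nabla^\perp_{e_2}e_3=0,
\end{equation*}
with no components outside $\mathrm{span\,}\{e_3,e_4,e_5\}$. Together with $\nabla^\perp e_4=0$ this already gives $\nabla^\perp(N_1 M)\subset N_1 M$ when $\tau=0$ (Case~1). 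In Case~2, plugging these into $\nabla^\perp_{e_1}h(e_1,e_1)=-\lambda e_3$ and $\nabla^\perp_{e_2}h(e_1,e_1)=0$ produces linear equations that, using $\nabla^\perp_{e_i}e_5\perp e_5$, yield $\nabla^\perp_{e_1}e_5\in\mathbb{R} e_3$ and $\nabla^\perp_{e_2}e_5=0$, both inside $N_2 M$, hence $\nabla^\perp(N_2 M)\subset N_2 M$. The main obstacle I anticipate is the global character of the dichotomy: a continuity/analyticity argument along the coordinate chart of Lemma~\ref{CorryofPropNonDegeBicCond} should be required to ensure that a connected biconservative PMCV surface cannot mix Case~1 and Case~2.
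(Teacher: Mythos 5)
Your pointwise analysis follows essentially the paper's route: the same decomposition $h(e_1,e_1)=-\gamma e_3+\xi$, the same use of the Codazzi equations together with \eqref{SpaceLikeTNablaXpdte1N}--\eqref{SpaceLikeTNablaXpdte2N} to obtain $\nabla^\perp_{e_2}e_3=\nabla^\perp_{e_2}\xi=0$, $\nabla^\perp_{e_1}e_3=-\tanh\theta\,\xi$ and $\nabla^\perp_{e_1}\xi\in\mathrm{span}\{e_3,\xi\}$, and the same identification $N_2M=N_1M+\mathbb{R}e_3$. The parallelism assertions in both cases are handled correctly.

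The genuine gap is exactly the one you flag at the end and leave unresolved: the global dichotomy ``$\xi\equiv 0$ on $M$ or $\xi$ nowhere zero on $M$'' is the heart of the lemma (it is what turns the pointwise alternative into a statement valid \emph{at every point} of a connected surface), and it does not follow from continuity, nor from analyticity, which is not assumed. The paper closes it with a short ODE argument: setting $\zeta=\langle\xi,\xi\rangle$, the structure equations give $e_2(\zeta)=0$ and $e_1(\zeta)=2\gamma\tanh\theta\,\zeta$; in the coordinates of Lemma \ref{CorryofPropNonDegeBicCond} this is a linear first-order ODE in $u$, so if $\zeta$ vanishes at one point it vanishes on a whole chart by uniqueness of solutions, and then on all of $M$ by connectedness; since $\xi$ is space-like, $\zeta=0$ is equivalent to $\xi=0$. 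You need this (or an equivalent open-and-closed argument) to make your case split legitimate. A secondary point: in Case 1 you only exclude $\gamma\equiv 0$, but $\dim N_1M=2$ \emph{at every point} requires $\gamma$ to be nowhere zero, since a single zero of $\gamma$ drops $\dim N_1M$ to $1$ there; the paper infers non-vanishing of $\gamma$ from the first equation of \eqref{LemmaL41f0SPLEq5} rather than from $\lambda\neq 0$.
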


\begin{proof}
Put $\langle\xi,\xi\rangle=\zeta$. First, we claim that either $\zeta$ is identically zero or it is non-vanishing on $M$. We combine \eqref{CorryofPropNonDegeBicCond1} and \eqref{CoDimEq1} with the equations \eqref{CodazziLn1fc1}, \eqref{CodazziLn1fc2}, \eqref{SpaceLikeTNablaXpdte1N} and \eqref{SpaceLikeTNablaXpdte2N}  to get
\begin{eqnarray}    
    \label{CoDimLemma1Eq1} e_2(\gamma)=e_2(\theta)&=&0,\\
    \label{CoDimLemma1Eq2} \nabla^\perp_{e_2}e_3=\nabla^\perp_{e_2}\xi&=&0,\\
    \label{CoDimLemma1Eq3b} \nabla^\perp_{e_1}e_3&=&-\tanh\theta\xi,\\
		\label{CoDimLemma1Eq3} \cosh\theta\sinh\theta\left(-\frac{f''}{f} + \frac{f'{}^2 + c}{f^2}\right)e_3 &=& e_1(\gamma)e_3-\gamma\tanh\theta\xi-\nabla^\perp_{e_1} \xi
\end{eqnarray}
from which we also have 
\begin{eqnarray}  
    \label{CoDimLemma1Eq4} e_2(\zeta)&=&0,\\
    \label{CoDimLemma1Eq5} e_1(\zeta)&=&2\gamma\tanh\theta\zeta.
\end{eqnarray}

Now, assume the existence of $p\in M$ such that $\xi(p)=0$. By considering \eqref{CorryofPropNonDegeBicCond1}, we use  a local coordinate system  $(\bar u,\bar v)$ on $\mathcal N_p'$ starting from $(u,v)$ such that $\left.e_1\right|_{\mathcal N_p'}=\partial_u$ and $\left.e_2\right|_{\mathcal N_p'}=\partial_v$. Then, because of \eqref{CoDimLemma1Eq1},  \eqref{CoDimLemma1Eq4} and  \eqref{CoDimLemma1Eq5}, we have 
$$\zeta=\zeta(u),\ \gamma=\gamma(u),\ \theta=\theta(u), \qquad \zeta'=2\gamma\tanh\theta\zeta, \ \zeta(0)=0$$
on $\mathcal N_p'$. This shows that $\zeta=0$ on $\mathcal N_p'$. By the connectedness, $\zeta=\langle\xi,\xi\rangle=0$ on $M$. This proves our claim. Since $\xi$ is space-like by \eqref{CoDimEq1}, we have either $\xi=0$ on $M$ or $\xi(p)\neq 0$ for all $p\in M$.

\textbf{Case 1}. $\xi=0$ on $M$. In this case, \eqref{CoDimLemma1Eq3b} and \eqref{CoDimLemma1Eq3} turn into 
\begin{eqnarray}    
    \label{CoDimLemma1Case1Eq1} \nabla^\perp_{e_1}e_3&=&0,\\
    \label{CoDimLemma1Case1Eq2} e_1(\gamma) &=& \cosh\theta\sinh\theta\left(-\frac{f''}{f} + \frac{f'{}^2 + c}{f^2}\right)
\end{eqnarray}
We note the first equation in \eqref{LemmaL41f0SPLEq5} and \eqref{CoDimLemma1Eq1} ensures that $\gamma$ is non-vanishing on $M$. So we have the Case 1 of the lemma because of \eqref{CoDimLemma1Eq2} and \eqref{CoDimLemma1Case1Eq1}.

\textbf{Case 2}. $\xi(p)\neq0$ for all $p\in M$. In this case, \eqref{CoDimEq1} implies $N_1=\mathrm{span\, } \{\gamma e_3-\xi,e_4\}$ and $\eta\not\in N_1$. Moreover, from \eqref{CoDimLemma1Eq3} we have
\begin{equation}    
   \label{CoDimLemma1Case2Eq1} \cosh\theta\sinh\theta\left(\frac{e_1(\gamma)}{\zeta}+\frac{f''}{f\zeta} - \frac{f'{}^2 + c}{\zeta f^2}\right)e_3 = \nabla^\perp_{e_1} \frac1{\zeta}\xi.
\end{equation}
\eqref{CoDimLemma1Eq2}, \eqref{CoDimLemma1Eq3b} and \eqref{CoDimLemma1Case2Eq1} yield the Case 2 of the lemma.
\end{proof}


Now, we are ready to prove the main result of this section.
\begin{theorem}\label{CoDimMainTheorem}
Let $M$ be an oriented space-like biconservative PMCV surface in $L^n_1(f,c), n\geq 6$. Then, there exists a totally geodesic submanifold $N$ of $L^n_1(f,c)$ such that $M\subset N$ and $\dim N$ is either 4 or 5.  
\end{theorem}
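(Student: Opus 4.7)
My plan is to build from Lemma \ref{CoDimLemma1} a $\widetilde\nabla$-parallel and $\widetilde R$-invariant subbundle $E\subset TL^n_1(f,c)|_M$ of constant rank $4$ or $5$ containing $TM$, and then invoke the classical reduction-of-codimension theorem to produce the required totally geodesic $N$. In the Case~1 of Lemma \ref{CoDimLemma1} (i.e.\ $\xi\equiv 0$) I take $E=TM\oplus N_1M$, so that fiber-wise $E_p=\mathrm{span}\{e_1,e_2,e_3,e_4\}$ has rank $4$. In Case~2 (i.e.\ $\xi$ never vanishes) I take $E=TM\oplus N_2M$, so that fiber-wise $E_p=\mathrm{span}\{e_1,e_2,e_3,e_4,\xi/\sqrt{\langle\xi,\xi\rangle}\}$ has rank $5$. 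Smoothness and constancy of the rank follow from the non-vanishing of $\xi$ in Case~2 and from parallelism of $e_4$ (coming from $H=H_0e_4$ being parallel).

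Next I check that $E$ is parallel along $M$. For $Y\in\Gamma(TM)$ and $X\in TM$, the Gauss formula \eqref{Gauss} gives $\widetilde\nabla_XY=\nabla_XY+h(X,Y)\in TM\oplus N_1M\subset E$. For a section $\zeta$ of the normal complement of $TM$ inside $E$, the Weingarten formula \eqref{Weingarten} gives $\widetilde\nabla_X\zeta=-A_\zeta X+\nabla^\perp_X\zeta$, where $-A_\zeta X\in TM\subset E$ and $\nabla^\perp_X\zeta\in E$ precisely by the invariance $\nabla^\perp(N_iM)\subset N_iM$ supplied by Lemma \ref{CoDimLemma1}. Hence $E$ is $\widetilde\nabla$-parallel along $M$.

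Curvature-invariance of $E$ is the step where the warped-product structure enters. The key observation is $\partial/\partial t|_M=\sinh\theta\,e_1+\cosh\theta\,e_3\in E$ in both cases, so the decomposition $W=-\langle\partial/\partial t,W\rangle\partial/\partial t+\bar W$ preserves $E$: if $W\in E$ then both summands lie in $E$. Lemma \ref{LemmaLn1f0CurvTensor} then gives $\widetilde R(X,Y)Z\in E$ whenever $X,Y,Z\in E$, since each term on the right-hand side of the formulas there is a scalar multiple of $\partial/\partial t$, $\bar X$, $\bar Y$, or $\bar Z$, all of which lie in $E$.

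With $E$ of constant rank $k\in\{4,5\}$, containing $TM$, parallel under $\widetilde\nabla$ along $M$, and invariant under $\widetilde R$, the classical reduction-of-codimension theorem produces a totally geodesic submanifold $N\subset L^n_1(f,c)$ of dimension $k$ with $TN|_M=E$ and $M\subset N$; concretely, $N$ is the union of ambient geodesics through points of $M$ with initial velocity in $E$, and the two invariance properties are exactly what is needed to show this set is a smooth totally geodesic submanifold tangent to $E$. The main obstacle to watch for is that the reduction theorem is most often stated in space forms, so one has to be careful to use a version valid in a general semi-Riemannian ambient space; the verification of $\widetilde R$-invariance of $E$ above is precisely the hypothesis that makes the standard proof go through in $L^n_1(f,c)$, yielding $\dim N=4$ in Case~1 and $\dim N=5$ in Case~2.
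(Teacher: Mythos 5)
Your construction of the parallel subbundle $E=TM\oplus N_1M$ (rank $4$) or $E=TM\oplus N_2M$ (rank $5$) out of Lemma \ref{CoDimLemma1}, together with the verification that $E$ is $\widetilde\nabla$-parallel along $M$ and that $\widetilde R(E,E)E\subset E$ (using $\partial/\partial t|_M\in E$), is correct and matches the information the paper extracts. The gap is in the final step. You assert that parallelism of $E$ plus curvature-invariance of its fibers is ``exactly what is needed'' for the union of ambient geodesics with initial velocity in $E$ to be a totally geodesic submanifold. That is false in a general semi-Riemannian ambient space: curvature-invariance of a subspace $V\subset T_pN$ is a \emph{necessary} condition for the existence of a totally geodesic submanifold tangent to $V$, but it is not sufficient unless the ambient space is (locally) symmetric or one also controls all covariant derivatives $\widetilde\nabla^k\widetilde R$ on $V$. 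A Robertson-Walker space $L^n_1(f,c)$ is not locally symmetric for generic $f$ (the curvature coefficients $f''/f$ and $(f'^2+c)/f^2$ depend on $t$), so the ``classical reduction-of-codimension theorem'' you invoke does not apply off the shelf; the general version of that theorem takes the \emph{existence} of a totally geodesic leaf tangent to $E_p$ as a hypothesis rather than deriving it from curvature-invariance.

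The missing ingredient is to actually exhibit the totally geodesic submanifold tangent to $E_p$, and this is where the warped-product structure must be used more seriously than you do. The paper's route: since $M$ has flat normal bundle (Corollary \ref{CorryFlatNB}) and $\nabla^\perp(N_iM)\subset N_iM$, one produces an orthonormal frame $\xi_\alpha$ of $E^\perp$ with $\widetilde\nabla\xi_\alpha=0$; because $\eta\in N_iM$ these $\xi_\alpha$ are purely spatial, and the connection formula \eqref{RWSTconn} forces $\xi_\alpha=\tfrac1f C_\alpha$ with $C_\alpha$ parallel in the product $I\times R^{n-1}(c)$. Projecting to the fiber, one applies Erbacher's theorem \emph{inside the Riemannian space form} $R^{n-1}(c)$ — where totally geodesic submanifolds tangent to any subspace do exist — to get a totally geodesic $\bar N\subset R^{n-1}(c)$ of dimension $3$ or $4$, and then checks that $N=I\times_f\bar N$ is totally geodesic in $L^n_1(f,c)$ and tangent to $E$. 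To repair your argument you would need either to carry out this descent to the fiber, or to verify directly that for the specific subspaces $E_p=\mathbb{R}\,\partial/\partial t\oplus(E_p\cap(\partial/\partial t)^\perp)$ the submanifolds $I\times_f\bar N$ realize them as tangent spaces of totally geodesic submanifolds; you would also need to add the local-to-global patching step (constancy of $\dim N_iM$ and connectedness of $M$) to get a single $N$ containing all of $M$.
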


\begin{proof}
Let $p\in M$. We are going to consider the Case 1 and Case 2 of Lemma \ref{CoDimLemma1}, separately. Note that by Corollary \ref{CorryFlatNB}, $M$ has flat normal bundle.

\noindent\textbf{Case 1.} In this case, $N_1M$ and its orthogonal complementary $(N_1M)^\perp$ in the normal bundle of $M$ are invariant under the normal connection. Since $M$ has flat normal bundle, similar to the proof of \cite[Lemma 1]{Erbacher1971}, there exists orthonormal vector fields $\xi_5,\xi_6,\hdots,\xi_n\in (N_1M)^\perp$ such that $\nabla^\perp \xi_\alpha=0$ which yields $\widetilde\nabla \xi_\alpha=0$ for $\alpha\geq 5$.

Let $(\mathcal N_p,(u,v))$ be a local coordinate system near to $p$ described in Lemma \ref{CorryofPropNonDegeBicCond}.
 Put $\bar{\mathcal N}_p=\Pi^2(\mathcal N_p)$. First, we claim that $\bar{\mathcal N}_p$ lies on a 3-dimensional totally 
geodesic submanifold $\bar{N}_p$ of $R^{n-1}(c)$. Note that we have
\begin{eqnarray*}
0&=&\widetilde\nabla_{\partial_u} \xi_\alpha=\nabla^0_{\partial_u} \xi_\alpha+\frac{f'}{f}\xi_\alpha,\\
0&=&\widetilde\nabla_{\partial_v} \xi_\alpha=\nabla^0_{\partial_v} \xi_\alpha\\
\end{eqnarray*}
which imply that $$\xi_\alpha=\frac 1f C_\alpha$$  for $\alpha\geq 5$, where $C_\alpha$ satisfies
\begin{equation}\label{CoDimMainTheoremPEq1}
\nabla^0 C_\alpha=0. 
\end{equation}
 On the other hand, because $\eta\in N_1M$, we have $\widetilde g(C_\alpha,\eta)=0$ which implies
\begin{equation}\label{CoDimMainTheoremPEq0}
g_c(\Pi^2_* (C_\alpha),X)=0
\end{equation}
whenever $X$ is a vector field in $R^{n-1}(c)$ tangent to $\bar{\mathcal N}_p$
and we have 
\begin{equation}\label{CoDimMainTheoremPEq0b}
\widetilde g_c(\Pi^2_* (C_\alpha),\Pi^2_* (C_\beta))=\delta_{\alpha\beta}.
\end{equation}
Thus, \eqref{CoDimMainTheoremPEq0} and  \eqref{CoDimMainTheoremPEq0b} yield that $\{\Pi^2_* (C_5),\Pi^2_* (C_6),\hdots,\Pi^2_* (C_n)\}$ is an orthonormal set of vector fields normal to $\bar{\mathcal N}_p$ in $R^{n-1}(c)$. Furthermore, from \eqref{CoDimMainTheoremPEq1} we have 
$$\nabla^{R^{n-1}(c)}_X\Pi^2_* (C_\alpha)=0.$$
Therefore, \cite[Theorem]{Erbacher1971} implies that $\bar{\mathcal N}_p$ lies on a $3$-dimensional totally geodesic submanifolds $\bar{N}_p$ of $R^{n-1}(c)$. Put $N_p=I\times_f \bar{N}_p$. Then, $N_p$ is a $4$-dimensional totally geodesic submanifold of $L^n_1(f,c)$ and $\mathcal N_p\subset N_p$. This proves the local result.

The global result follows from being constant of $\dim N_1$ on $M$ similar to the idea of the proof of \cite[Proposition 1]{Erbacher1971}: If $q\in M$ and $\mathcal N_q\cap\mathcal  N_p\neq\varnothing$, then we have $\mathcal N_p\cap\mathcal  N_q\subset N_p\cap N_q$ for a totally geodesic submanifold $N_q\ni q$. If  $N_p\neq N_q$, then  $\mathrm{dim\, }N_p\cap N_q<4$ which implies  that $\mathrm{dim\, } N_1<2$ at $r$ whenever $r\in \mathcal N_q\cap\mathcal  N_p$.  However, this yields a contradiction.  Hence, we have $N_p= N_q$. By the connectedness, $M\subset N^4$ and $N$ is a totally geodesic submanifold of $L^n_1(f,c)$. 

\noindent\textbf{Case 2.} In this case, similar to Case 1, we consider orthonormal vector fields $\xi_6,\xi_7,\hdots,\xi_n\in (N_2M)^\perp$ which satisfies $\widetilde\nabla \xi_\alpha=0$ for $\alpha\geq 6$. We get \eqref{CoDimMainTheoremPEq1}, \eqref{CoDimMainTheoremPEq0} and \eqref{CoDimMainTheoremPEq0b} for $\alpha\geq 6$. Since $\dim N_2$ is constant on $M$, \eqref{CoDimMainTheoremPEq1}, \eqref{CoDimMainTheoremPEq0} and \eqref{CoDimMainTheoremPEq0b} for $\alpha\geq 6$  yield the same conclusion for a totally geodesic submanifold $N^5$ of $L^n_1(f,c)$. 
\end{proof}

By combining Theorem \ref{CoDimMainTheorem} with Remark \ref{NonExistenceRemark} and Proposition \ref{NonExistenceE11H4}, we get the following results:
\begin{corry}\label{CoDimMainTheoremCor2}
A space-like biconservative PMCV surface in  $\mathbb E^1_1\times \mathbb S^n,\ n\geq 6$ is contained in a totally geodesic submanifold $N^5$ of $\mathbb E^1_1\times \mathbb S^n$.
\end{corry}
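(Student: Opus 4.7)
The plan is to apply Theorem \ref{CoDimMainTheorem} and then eliminate the 4-dimensional case using the non-existence result from Remark \ref{NonExistenceRemark}. Throughout, the ambient space $\mathbb E^1_1\times\mathbb S^n$ corresponds to $L^{n+1}_1(1,1)$ in the paper's notation, so for $n\geq 6$ we have total dimension $n+1\geq 7$ and Theorem \ref{CoDimMainTheorem} applies. The theorem produces a totally geodesic submanifold $N$ of dimension $4$ or $5$ containing $M$, and the plan is to rule out $\dim N=4$.

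The first step is to inspect the structure of $N$ furnished by the proof of Theorem \ref{CoDimMainTheorem}. There, locally one has $N_p=I\times_f \bar N_p$, where $\bar N_p$ is a totally geodesic submanifold of $R^{n-1}(c)$; in the present situation $I=\mathbb E^1_1$, $f=1$, and $R^{n-1}(c)=\mathbb S^n$, so $\bar N_p$ is a totally geodesic sphere $\mathbb S^k$ inside $\mathbb S^n$, with $k=\dim N-1\in\{3,4\}$. Globally, $N=\mathbb E^1_1\times\mathbb S^k$, where the $\mathbb E^1_1$ factor is the original timelike line.

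If $\dim N=4$, then $k=3$ and $M\subset \mathbb E^1_1\times\mathbb S^3=\mathbb E^1_1\times R^3(1)$. Since $N$ is totally geodesic in the ambient Lorentzian product, $M$ is a space-like biconservative PMCV surface of $\mathbb E^1_1\times R^3(1)$ as well (biconservativity and parallelism of the mean curvature vector are preserved when passing to a totally geodesic ambient submanifold). However, Remark \ref{NonExistenceRemark} explicitly states that no space-like biconservative PMCV surface exists in the 4-dimensional product $\mathbb E^1_1\times R^3(c)$ except the trivial horizontal slice $T=0$, which is excluded by the standing assumption $T\neq 0$. This contradiction forces $\dim N=5$, hence $M\subset \mathbb E^1_1\times\mathbb S^4$, which is the desired conclusion.

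The only nontrivial point in carrying this out is verifying that the totally geodesic submanifold $N$ produced by Theorem \ref{CoDimMainTheorem} is genuinely of the product form $\mathbb E^1_1\times\mathbb S^k$, i.e., that the $\partial/\partial t$ direction lies tangent to $N$. This is visible from the proof of Theorem \ref{CoDimMainTheorem}: the construction preserves the $I$-factor by taking $N_p=I\times_f\bar N_p$, so the comoving field is tangent to $N$ and the slices are totally geodesic spheres. Once this is noted, the argument is immediate. No further computation is required, since both Theorem \ref{CoDimMainTheorem} and Remark \ref{NonExistenceRemark} are assumed.
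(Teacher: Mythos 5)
Your argument is correct and is exactly the route the paper takes: Theorem \ref{CoDimMainTheorem} gives $\dim N\in\{4,5\}$, and the $4$-dimensional case is excluded by Remark \ref{NonExistenceRemark} since $T\neq 0$ is a standing assumption. Your additional check that $N$ inherits the product form $\mathbb E^1_1\times\mathbb S^k$ from the construction $N_p=I\times_f\bar N_p$, and that the PMCV and biconservativity conditions restrict to a totally geodesic $N$, is a welcome elaboration of what the paper leaves implicit.
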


\begin{corry}\label{CoDimMainTheoremCor1}
There are no space-like biconservative PMCV surface in  $\mathbb E^1_1\times \mathbb H^n$.
\end{corry}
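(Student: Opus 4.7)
The plan is to combine the co-dimension reduction in Theorem \ref{CoDimMainTheorem} with the non-existence results already established in low dimensions. Writing $\mathbb{E}^1_1\times\mathbb{H}^n=L^{n+1}_1(1,-1)$, the argument splits naturally according to the ambient dimension $n+1$, and one should deal with small $n$ first and then deduce the general case from the reduction theorem.

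For the small cases, no new work is required. If $n=4$, the statement is precisely Proposition \ref{NonExistenceE11H4}. If $n=3$, the ambient manifold $\mathbb{E}^1_1\times\mathbb{H}^3$ is exactly the $4$-dimensional Cartesian product considered in Remark \ref{NonExistenceRemark} with $c=-1$, and that remark already rules out the non-trivial case (recall that the assumptions exclude $T=0$).

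For $n\geq 5$ the ambient dimension satisfies $n+1\geq 6$, so Theorem \ref{CoDimMainTheorem} is applicable. Assume, for a contradiction, that $M$ is a space-like biconservative PMCV surface in $\mathbb{E}^1_1\times\mathbb{H}^n$. Then there exists a totally geodesic submanifold $N\supset M$ of dimension $4$ or $5$. Inspection of the construction of $N$ in the proof of Theorem \ref{CoDimMainTheorem} shows that $N=I\times_f\bar N$, where $\bar N$ is a totally geodesic submanifold of the fiber $\mathbb{H}^n$ of dimension $3$ or $4$. Since here $f\equiv 1$ the warping is trivial, so $N=\mathbb{E}^1_1\times\bar N$, and because totally geodesic submanifolds of $\mathbb{H}^n$ are themselves hyperbolic spaces of the same sectional curvature, $\bar N$ is isometric to $\mathbb{H}^3$ or $\mathbb{H}^4$.

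Consequently $M$ is realized as a space-like biconservative PMCV surface in either $\mathbb{E}^1_1\times\mathbb{H}^3$ or $\mathbb{E}^1_1\times\mathbb{H}^4$; the former is excluded by Remark \ref{NonExistenceRemark} and the latter by Proposition \ref{NonExistenceE11H4}, producing the desired contradiction. The only conceptual step is identifying the totally geodesic submanifold produced by Theorem \ref{CoDimMainTheorem} as a product of the form $\mathbb{E}^1_1\times\mathbb{H}^k$; this is not genuinely hard since the warping factor is trivial and totally geodesic submanifolds of $\mathbb{H}^n$ are classified, so no serious obstacle is anticipated.
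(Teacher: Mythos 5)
Your proof is correct and follows essentially the same route as the paper, which simply combines Theorem \ref{CoDimMainTheorem} with Remark \ref{NonExistenceRemark} and Proposition \ref{NonExistenceE11H4}. The only difference is that you spell out the details the paper leaves implicit (the dimension case split and the identification of the totally geodesic submanifold as $\mathbb{E}^1_1\times\mathbb{H}^3$ or $\mathbb{E}^1_1\times\mathbb{H}^4$), and you do so correctly.
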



\section*{Acknowledgements}
This work was carried out during the 1001 project supported by the Scientific and Technological Research Council of T\"urkiye (T\"UB\.ITAK)  (Project Number: 121F352).

\end{document}